\newcommand{\trace}[1]{\textrm{tra}\,#1}
\newcommand{\vectorr}[1]{\mathbf{#1}}
\title{Multiple Local and Global Bifurcations and Their Role in Quorum Sensing Dynamics}
\author{
	Mariana Harris\thanks{Department of Computational Medicine, UCLA, Los Angeles, CA90095, USA.
	\email{mharris94@g.ucla.edu}.}
	\and
	Viviana~Rivera--Estay\thanks{Departamento de Matem{\'a}tica, F{\'i}sica y Estad{\'i}stica, Universidad Cat{\'o}lica del Maule, San Miguel 3605, Talca, Chile.
	\email{vivianarivera.mate@gmail.com}. 
	Funded by Agencia Nacional de Investigac\'on y Desarrollo de Chile (Grant: Beca Doctorado Nacional N$^\circ 21211263$).}
	\and
	Pablo Aguirre\thanks{Departamento de Matem{\'a}tica, Universidad T{\'e}cnica Federico Santa Mar{\'i}a, Casilla 110-V, Valpara{\'i}so, Chile.
	\email{pablo.aguirre@usm.cl}. 
	Partially funded by Proyecto Interno UTFSM PI$\_$LIR$\_$24$\_$04.}
	\and
	V\'ictor F.~Bre\~na--Medina\thanks{Department of Mathematics, ITAM, R\'io Hondo 1, Ciudad de M\'exico 01080, M\'exico.
	\email{victor.brena@itam.mx}.
	Funded by Asociación Mexicana de Cultura A.C.}
}
\begin{document}
\maketitle

\begin{abstract}
Quorum sensing governs bacterial communication, playing a crucial role in regulating population behaviour. We propose a mathematical model that uncovers chaotic dynamics within quorum sensing networks, highlighting challenges to predictability. The model explores interactions between autoinducers and two bacterial subtypes, revealing oscillatory dynamics in both a constant autoinducer sub-model and the full three-component model. In the latter case, we find that the complicated dynamics can be explained by the presence of homoclinic Shilnikov bifurcations. We employed a combination of normal form analysis and numerical continuation methods to analyse the system. 
\end{abstract}  

\begin{keywords}
Quorum sensing modelling, Synchronisation, Shilnikov homoclinic bifurcation, chaos. 
\end{keywords}

\begin{AMS}
34C23, 37G05, 37G15, 37C29, 37G20, 92B25.
\end{AMS}

\pagestyle{myheadings}
\thispagestyle{plain}
\markboth{M.~Harris, V.~Rivera--Estay, P.~Aguirre and V.~Bre\~na--Medina}
{Multiple Local and Global Bifurcations and Their Role in Quorum Sensing Dynamics}

%%%%%%%%%%%%%%%%%%%%%
\section{Introduction}
\label{sec:intro}
%%%%%%%%%%%%%%%%%%%%%

Quorum sensing (QS) ---also known as auto-induction--- is a mechanism of regulation of gene expression which allows communication in a cell population~\cite{miller01,waters05}. In the case of bacteria, this process is a density dependent behaviour, which consists of cell-to-cell communication through the release of chemical signal molecules known as {\em autoinducers}. This communication enables a population to explore a given medium and alter its behaviour in response to its own fluctuation, which occurs from changes in the number of species present in the community. Specifically, the basis of the QS mechanism is the production and release of autoinducers, whose concentration increases as a function of bacterial population density. When the population density exceeds a threshold, the bacteria detect the autoinducers which then activate genes that switch on a behavioural trait. Autoinducers located in the external medium diffuse and bind with a specific protein within the bacteria. A protein-autoinducer complex is then formed that attaches to a region of the bacteria's DNA. This in turn regulates the production of the autoinducers enhancing a specific regulated cell-density behaviour; see, for instance,~\cite{chopp01,dockery01,ward01}.

The QS phenomenon was first observed by Nealson and  Hastings in 1970 in a bioluminescent bacteria called \textit{Vibrio fischeri}~\cite{article:Nelson1970}. This bacteria forms a mutually beneficial symbiotic relationship with some species of squid such as the Hawaiian squid \textit{Euprymna scolopes}. The \textit{V. fischeri} live in the squid's light organ which provides nourishment, allowing the bacteria to proliferate. When the bacteria reach a high population density the genes involved in bioluminescence are expressed. The produced light allows the squid to mask its shadow and avoid predation~\cite{article:Visick2000}. Following the discovery of the \textit{V. fisheri}'s density-dependent bioluminescent behaviour, other species of bacteria have been discovered to employ a QS mechanism; for instance, the bacteria \textit{Chromobacterium violeceum} gives rise to the production of a purple pigment~\cite{article:McClean1997}. Also, biofilms may present challenges  and opportunities in the industry setting as these may cause blockages in specialised machinery~\cite{Wirtanen1992}, gene expression~\cite{article:Lewis2002}, { inhibiting pathogen virulence~\cite{article:Hentzer2003, article:Wu2004}, and disrupting QS signalling~\cite{ article:Bjarnsholt2007,article:Yi2007}.  That is, quorum sensing inhibitors function by blocking signal receptors or degrading autoinducers, thereby altering bacterial gene expression and population dynamics~\cite{jiang,remy}. These interventions have been shown to reduce biofilm formation and disrupt survival pathways, indirectly increasing decay rates~\cite{naga2}. Furthermore although the specific strategy of combining QS inhibitors with periodic modulation of the decay rate remains an area for future research, periodically varying bacterial decay rates could provide an additional level of control by introducing fluctuating conditions that prevent bacterial adaptation to constant stress. This strategy may exploit bacterial dynamics by periodically altering environmental factors, such as nutrient availability or physical conditions, which can exacerbate the vulnerabilities of bacterial populations and inhibit resistance development. Alternating between higher and lower decay rates could disrupt bacterial survival strategies, leading to a more effective control mechanism over time. Such temporal variability in decay rates could enhance the efficacy of QS inhibitors by making bacterial populations more susceptible during periods of transition, preventing long-term adaptation and resistance. That is, upon integrating QS inhibitors into bacterial control strategies with periodic decay rate modulation, their ability to modulate microbial behaviours can be exploited, advancing applications in antimicrobial therapies and bioprocess management.
}

The QS phenomena has been studied from either deterministic as well as stochastic and hybrid mathematical modelling approaches, which are inspired by many biologically plausible perspectives. For instance, in~\cite{judithpv} a plant-pathogen population controlling virulence on leaf surfaces is investigated by means of a continuous-time Markov process, where a linear birth and a logistic-death-migration population processes along with an autocatalytic mechanism for {\em acyl homoserine lactone} autoinducers concentration are taken into account. Their findings include that QS mechanism and autoinducers diffusion follow an inversely proportional relationship. Another interesting work can be seen in~\cite{frederick}; there, extracellular polymeric substances~(EPS) production is modelled in a growing biofilm under various environmental conditions, which yields to a reaction-diffusion system where the diffusion coefficient is density-dependent. The authors found that QS-induced EPS production permits a biofilm to switch from a colonisation state to a protection state, which is a crucial characteristic of QS. Even further, in~\cite{kuttler} an anomalous diffusion process is taken into account, which captures a delayed maundering of substances as a consequence of transcriptional features. On the other hand, a deterministic gene regulatory network point of view is addressed in~\cite{chen,elowitz,ojalvo} by considering a QS pathway involving multiple feedforward and negative feedback loops aside from transcription time delays in a cancer drug realising scheme. A key result from their manuscripts lie on the appearance of Hopf bifurcations, other secondary oscillatory-induced bifurcations and a time-delay threshold, which coordinate self-sustained oscillating features. This small sample of approaches pays special attention to transcriptional time-delay as well as on-and-off activating crucial switches. Our area of focus is primarily directed towards the later. 

In this work we propose a simple model which consists of features of a bacterial population and its inherent autoinducer concentration interaction. In other words, we assume that the population of bacteria is locally activated and inhibited by the production of autoinducers.
In order to capture the key qualitative ingredients of QS mechanisms in bacteria, we follow a simplified approach. In so doing, we assume that the autoinducers bind to receptors that enhance the expression of a particular gene, and its production also directly depends on the bacteria population~(e.~g.~\cite{article:Zi2012}). When a bacterium responds to an external stimulus by increasing the amount of a cellular component, the process is {\em up-regulated}, and it is {\em down-regulated} otherwise. To put it another way, the dynamics of the bacteria are regulated by both, autoinducers as well as growth of the bacteria population itself. Hence, the population is considered to be composed of cell sub-populations in two different states: up-regulated and down-regulated bacteria, namely motile~$m$ and static~$s$, respectively, and the concentration of autoinducers is given by~$q$. As autoinducers are produced by both sub-populations at a rate $r$ (see~\cite{article:Waters2005}), and the bacteria can either switch or remain in their category, we assume that:
\begin{enumerate*}[label=(\roman*)]
\item the growth rate of the motile bacteria population follows an autocatalytic process and is proportional to the probability that motile bacteria switch on-and-off in their current category. The response rate of motile bacteria to autoinducer concentration is then given by $k_1$ and is up-regulated by its own production and down-regulated by the production of static bacteria; that is, the dynamics follows an activator-inhibitor-like behaviour, where $m$ acts as the activator and $s$  as the inhibitor. We also assume that the media may be overpopulated, so $\gamma$ is a saturation parameter;
\item the production of static bacteria is assumed as a result of a cross-catalytic reaction of motile bacteria; this interaction may be taken as an indirect down-regulation process, which is modulated by $k_2$;
\item the entire bacteria population is constantly produced at a rate $(1+\varepsilon)\alpha$, where $0< \varepsilon <1$ is the ratio between both bacteria sub-populations growth rates. The autoinducers, motile and static bacteria decay at rates $\mu_1$, $\mu_2$ and $\mu_3$, respectively.  
\end{enumerate*}
Putting everything together, we obtain that the dynamical local interaction is governed by the system 
\begin{gather}
X:\left\{\begin{array}{rcl} \label{eq:3d}
\dot q &=& r(m+s)-\mu_1q\,,\\
\dot m & = & \dfrac{k_1qm^2}{s(1+\gamma m^2)}+\alpha-\mu_2m\,, \\
\dot s & = & k_2qm^2+\epsilon\alpha-\mu_3s\,.
\end{array}\right.
\end{gather}

Essentially, model \eqref{eq:3d} captures the dynamical behaviour of~QS, namely, the concentration of autoinducers depends on the production by bacteria in both states. When this concentration lies beyond a certain threshold, bacteria are expected to react in a synchronised-like way, as has been reported in~\cite{kaur,Li,taylor}, for instance. We have identified this feature by means of slowly varying a crucial parameter as can be seen further in section~\ref{sec:constantq}, and analyse the different dynamical scenarios of \eqref{eq:3d} to find parameter regimes for steady state long term behaviour and the observed periodic oscillations. However, we also find chaos which can be explained by the presence of Shilnikov homoclinic bifurcations.
 
The present manuscript is organised as follows: throughout section~\ref{sec:s0} we prove that the model solutions are always non-negative for feasible initial conditions. In section~\ref{sec:constantq} we take into consideration a constant autoinducer concentration and study the local stability of a reduced system. This is followed in  Section~\ref{sec:bifanal} by a bifurcation analysis where the role of a key parameter triggers leading nonlinear events, namely Hopf and Bogdanov--Takens bifurcations. Normal forms for both dynamical phenomena are also computed. From there, a thorough numerical bifurcation analysis is performed in section~\ref{sec:bifanal3D} in the full system, where conditions to the Shilnikov homoclinic chaos mechanism are found and numerically explored. Concluding remarks can be found in section~\ref{sec:disc}.   

%%%%%%%%%%%%%%%%%%%%%%%%%%%%%%%%%%%%%%%
\section{Non-negative solutions for realistic initial conditions}
\label{sec:s0}
%%%%%%%%%%%%%%%%%%%%%%%%%%%%%%%%%%%%%%%

Model (\ref{eq:3d}) is well-posed in the sense that every solution starting from a realistic (positive) initial condition remains non-negative.  In what follows we see (\ref{eq:3d}) as a vector field defined in the set $\Omega:=\{(q,m,s)\in\mathbb{R}^3|\,\,q\geq0, m\geq0, s>0\}$ and use the following standard notation~\cite{Arnold,Du06} for its components: $X(q,m,s)=X_1(q,m,s)\dfrac{\partial}{\partial q}+X_2(q,m,s)\dfrac{\partial}{\partial m}+X_3(q,m,s)\dfrac{\partial}{\partial s}$.

Let us study the behaviour of  $X$ in $\partial\Omega$ to show that $\Omega$ is invariant.
Let us subdivide $\partial\Omega$ into the following coordinate planes: $\partial\Omega=\omega_{qm}\cup\omega_{qs}\cup\omega_{ms}$, where $\omega_{qm}=\{(q,m,s)\in\mathbb{R}^3|\,\,q\geq0, m\geq0, s=0\}$, $\omega_{qs}=\{(q,m,s)\in\mathbb{R}^3|\,\,q\geq0, m=0, s>0\}$, and $\omega_{ms}=\{(q,m,s)\in\mathbb{R}^3|\,\,q=0, m\geq0, s>0\}$.

The restriction of (\ref{eq:3d}) to the plane $\omega_{qs}$ is $X(q,0,s)=(rs-\mu_1 q)\dfrac{\partial}{\partial q}+\alpha\dfrac{\partial}{\partial m}{+(\epsilon\alpha+\mu_3 s)\dfrac{\partial}{\partial s}}$. Since $\alpha>0$, the vector field (\ref{eq:3d}) on $\omega_{qs}$ points towards the interior of $\Omega$. Similarly, the restriction of (\ref{eq:3d}) to the plane $\omega_{ms}$ is $X(0,m,s)=r(m+s)\dfrac{\partial}{\partial q}+(\alpha-\mu_2 m)\dfrac{\partial}{\partial m}{+(\epsilon\alpha+\mu_3 s)\dfrac{\partial}{\partial s}}$. Since $r>0$, it follows that the vector field $X$ in $\omega_{ms}$ points towards the interior of $\Omega$.

On the other hand, system (\ref{eq:3d}) is not defined in the plane $\omega_{qm}$. In order to analyse the behaviour of (\ref{eq:3d}) near $\omega_{qm}$, let us consider the time scaling $t\mapsto st$. In this way, (\ref{eq:3d}) becomes
\begin{equation}
X_0:\left\{\begin{array}{rcl} \label{eq:s0}
\dot q &=& r(m+s)s-\mu_1qs,\\
\dot m & = & \dfrac{k_1qm^2}{1+\gamma m^2}+\alpha s-\mu_2ms, \\
\dot s & = & k_2qm^2s+\epsilon\alpha s-\mu_3s^2.
\end{array}\right.
\end{equation}
System (\ref{eq:s0}) is topologically equivalent to (\ref{eq:3d}) in $\Omega$. Moreover, (\ref{eq:s0}) is well defined for $s=0$ and, hence, it can be continually extended to the boundary plane $\omega_{qm}\subset\partial\Omega$. The restriction of (\ref{eq:s0}) to the plane $\omega_{qm}$ is $X_0(q,m,0)=\dfrac{k_1qm^2}{1+\gamma m^2}\dfrac{\partial}{\partial m}$. It follows that the (unique) solution of (\ref{eq:s0}) with initial condition $(q(0),m(0),s(0))=(q_0,m_0,0)\in\omega_{qm}$ is given by
\begin{equation}
\left\{\begin{array}{rcl} \label{eq:rectas}
q(t) &=& q_0,\\
m(t) & = & \dfrac{-1+k_1q_0m_0t+\gamma m_0^2+\sqrt{4\gamma m_0^2+(k_1tq_0m_0+\gamma m_0^2-1)^2}}{2\gamma m_0},\\
s(t) & = & 0.
\end{array}\right.
\end{equation}
As a consequence, the set $\omega_{qm}$ is an invariant plane of (\ref{eq:s0}) which consists of a continuum of straight lines 
parallel to the $m$-axis parameterized by (\ref{eq:rectas}). Moreover, both axes $q=0$ and $m=0$ consist of a continuum of equilibria.

Let us now study the behaviour of orbits of (\ref{eq:s0}) near the invariant plane $\omega_{qm}$. Let us search for the solution of (\ref{eq:s0}) with initial condition $(q(0),m(0),s(0))=(q_0,m_0,\delta)\in{\rm int}(\Omega)$, with $0<\delta\ll1$ sufficiently small. Specifically, consider a solution of the form:
\begin{equation}
\left\{\begin{array}{rcl} \label{eq:pertu1}
q(t) &=& q_0(t)+\delta q_1(t)+O(\delta^2),\\
m(t) & = & m_0(t)+\delta m_1(t)+O(\delta^2),\\
s(t) & = & s_0(t)+\delta s_1(t)+O(\delta^2),
\end{array}\right.
\end{equation}
where $(q_0(t),m_0(t),s_0(t))$ is the solution of (\ref{eq:s0}) in the limit as $\delta\rightarrow0$ and, hence, it is given by (\ref{eq:rectas}). In this way, (\ref{eq:pertu1}) is expressed as
\begin{equation}
\left\{\begin{array}{rcl} \label{eq:pertu2}
q(t) &=& q_0+\delta q_1(t)+O(\delta^2),\\
m(t) & = & m_0(t)+\delta m_1(t)+O(\delta^2),\\
s(t) & = & \delta s_1(t)+O(\delta^2),
\end{array}\right.
\end{equation}
with $m_0(t)= \dfrac{-1+k_1q_0m_0t+\gamma m_0^2+\sqrt{4\gamma m_0^2+(k_1tq_0m_0+\gamma m_0^2-1)^2}}{2\gamma m_0}.$ It follows that the higher order terms of (\ref{eq:pertu2}) must satisfy the initial conditions $(q_1(0),m_1(0),s_1(0))=(0,0,1)$ for $n=1$, and $(q_n(0),m_n(0),s_n(0))=(0,0,0)$ for $n\geq2$. 
Substitution of (\ref{eq:pertu2}) into (\ref{eq:s0}) leads to the following differential equations for the $O(\delta$) terms of $q(t)$:
\begin{flalign}
\delta\dot{q}_1(t) = &\; r\left(m_0(t)+\delta m_1(t)+\ldots+\delta s_1(t)+\ldots\right)\left(\delta s_1(t)+\ldots\right)\nonumber\\
&-\mu_1\left(q_0+\delta q_1(t)+\ldots\right)\left(\delta s_1(t)+\ldots\right) =\delta rm_0(t)s_1(t)-\delta\mu_1 q_0s_1(t)+O(\delta^2).\nonumber
\end{flalign}
Hence,
\begin{subequations}\label{eq:qms1}
\begin{equation}\label{eq:q1}
\dot{q}_1(t)=rm_0(t)s_1(t)-\delta\mu_1 q_0s_1(t), \hspace{5mm} q_1(0)=0.
\end{equation}

Similarly,
$$\dot m(t)=\dot{m}_0(t)+\delta\dot{m}_1(t) +O(\delta^2) = \dfrac{k_1q(t)m^2(t)}{1+\gamma m^2(t)}+\alpha s(t)-\mu_2 m(t)s(t).$$
For $0<\delta\ll1$ sufficiently small we have:
\begin{flalign}
\dot{m}_0(t)+\delta\dot{m}_1(t) +O(\delta^2) =& \;\dfrac{k_1q_0m_0^2(t)}{1+\gamma m_0^2(t)} \nonumber \\
&+\delta\left( \dfrac{k_1q_1(t)m_0^2(t)}{1+\gamma m_0^2(t)}- \dfrac{2\gamma k_1q_0m_0^3(t)m_1(t)}{(1+\gamma m_0^2(t))^2}+ \dfrac{2k_1q_0m_0^2(t)m_1(t)}{1+\gamma m_0^2(t)}\right)
 \nonumber\\
& + \delta\alpha s_1(t)-\delta\mu_2 m_0(t)s_1(t)+O(\delta^2).\nonumber
\end{flalign}
Thus,
\begin{flalign}\label{eq:m1}
\dot{m}_1(t)=&\:\dfrac{k_1q_1(t)m_0^2(t)}{1+\gamma m_0^2(t)}- \dfrac{2\gamma k_1q_0m_0^3(t)m_1(t)}{(1+\gamma m_0^2(t))^2}+ \dfrac{2k_1q_0m_0^2(t)m_1(t)}{1+\gamma m_0^2(t)}+ \alpha s_1(t)-\mu_2 m_0(t)s_1(t), \, \nonumber\\
&\:m_1(0)=0.
\end{flalign}
Likewise, 
\begin{flalign}
\delta\dot{s}_1(t) =& \;k_2\left(q_0+\delta q_1(t)+\ldots\right)\left(m_0(t)+\delta m_1(t)+\ldots\right)^2\left(\delta s_1(t)+\ldots\right)+\epsilon\alpha\left(\delta s_1(t)+\ldots\right)\nonumber\\
& - \mu_3\left(\delta s_1(t)+\ldots\right)^2 = \delta k_2 q_0m_0^2(t)s_1(t)+\delta\epsilon\alpha s_1(t)+O(\delta^2).\nonumber
\end{flalign}
Hence,
\begin{equation}\label{eq:s1}
\dot{s}_1(t)=\left(k_2 q_0m_0^2(t)+\epsilon\alpha\right)s_1(t), \hspace{5mm} s_1(0)=1.
\end{equation}
\end{subequations}
The initial value problem~\eqref{eq:qms1} defines any solution of (\ref{eq:s0}) starting at a distance $0<\delta\ll1$ from the invariant plane $\omega_{qm}$ with accuracy  $O(1/\delta)$. In particular, 
since $k_2 q_0m_0^2(t)+\epsilon\alpha>0$, it follows from~\eqref{eq:s1} that $s_1(t)$ is an increasing function for every $t>0$. Hence, the component $s(t)$ in (\ref{eq:s0}) is increasing near the plane $s=0$. Therefore, no trajectory of (\ref{eq:s0}) with initial condition in ${\rm int}(\Omega)$ can reach the boundary plane $\omega_{qm}$ in (finite or infinite) positive time.

Since (\ref{eq:s0}) is topologically equivalent to \eqref{eq:3d} in $\Omega$, we conclude from the analysis in this section that every trajectory of the original system \eqref{eq:3d} starting in $\Omega$ remains non-negative.

%%%%%%%%%%%%%%%%%%%%%%%%%%%%%%%
\section{Constant autoinducer concentration}
\label{sec:constantq}
%%%%%%%%%%%%%%%%%%%%%%%%%%%%%%%

To shed light on the understanding of QS we now analyse the bacteria interaction dynamics under the assumption of a constant autoinducer concentration $q_0 > 0$. In so doing, we have $\eta_1 = k_1q_0$ and $\eta_2 = k_2q_0$ as the motile and static bacteria response rates respectively. From~\eqref{eq:3d}, upon substituting the rescaled variables for the bacteria population~$u= (\eta_2/\eta_1)m$ and $ v= (\mu_3\eta_2/\eta_1^2)s$ as well as $\tau = \mu_3 t$ for time, and the new parameters
\begin{gather}
K = \gamma \left(\dfrac{\eta_1}{\eta_2}\right)^2\,, \quad  b = \dfrac{\mu_2}{\mu_3}\,, \quad  a= \dfrac{\eta_2 \alpha}{\eta_1 \mu_3}\,, \quad  e = \dfrac{\mu_3 \varepsilon}{\eta_1},\, \label{eq:4.10c}
\end{gather}
we obtain the following 2D constant autoinducer system  
\begin{equation}
\left\{\begin{array}{rl}\label{eq:4.10}
\dot u =f(u,v),  & f(u,v)=\dfrac{u^2}{v(1+Ku^2)}+a-bu, \\
\dot v = g(u,v), &  g(u,v)=u^2+ae-v.
\end{array}\right.
\end{equation}

Notice that key parameters arise so that: 
\begin{enumerate*}[label=(\roman*)]
\item $K$ holds a saturation role, 
\item $b$ characterises the decaying bacteria rate, and 
\item $a$ and $e$ capture bacteria production-related roles as well as inversely and directly proportional dependence of the constant autoinducers' concentration and decaying rate variations, respectively. The latter shows that parameter product $ae$ only depends on the bacteria response to autoinducers capacities and production of static subpopulation.
\end{enumerate*}

\begin{figure}[t!]
    \centering
    \includegraphics[scale=.24]{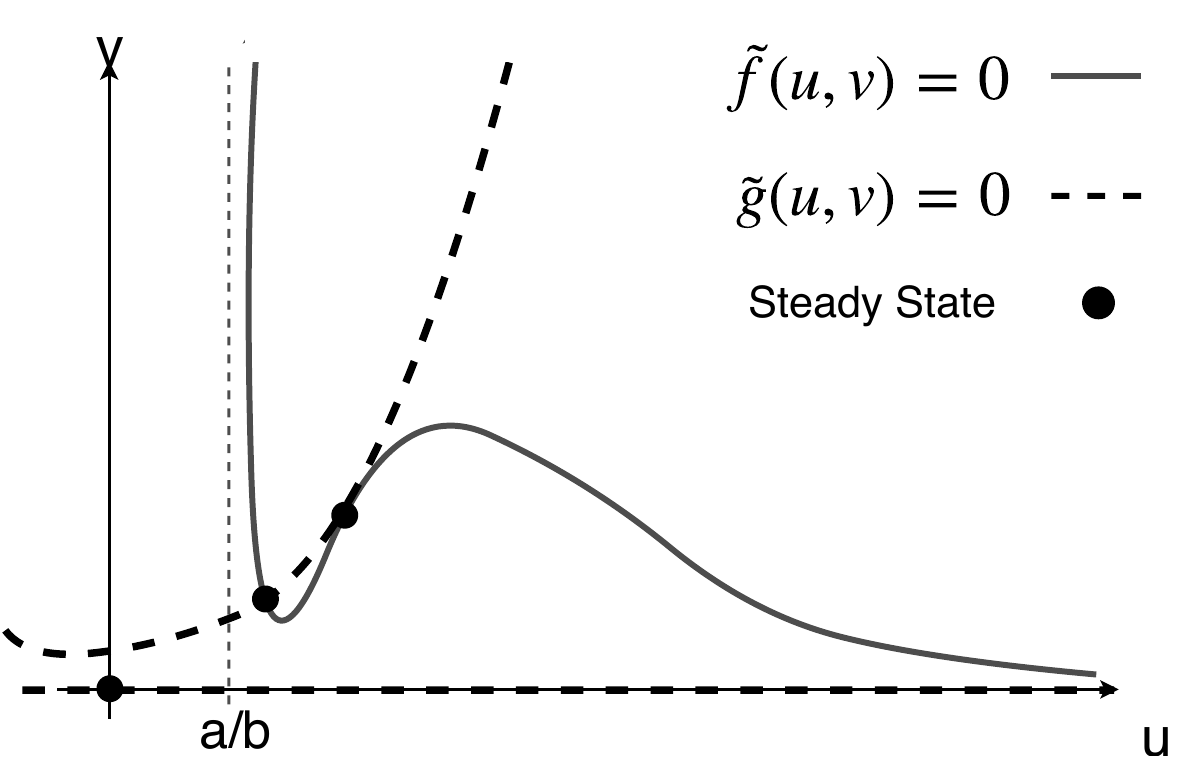}%
    \qquad
    \includegraphics[scale=.24]{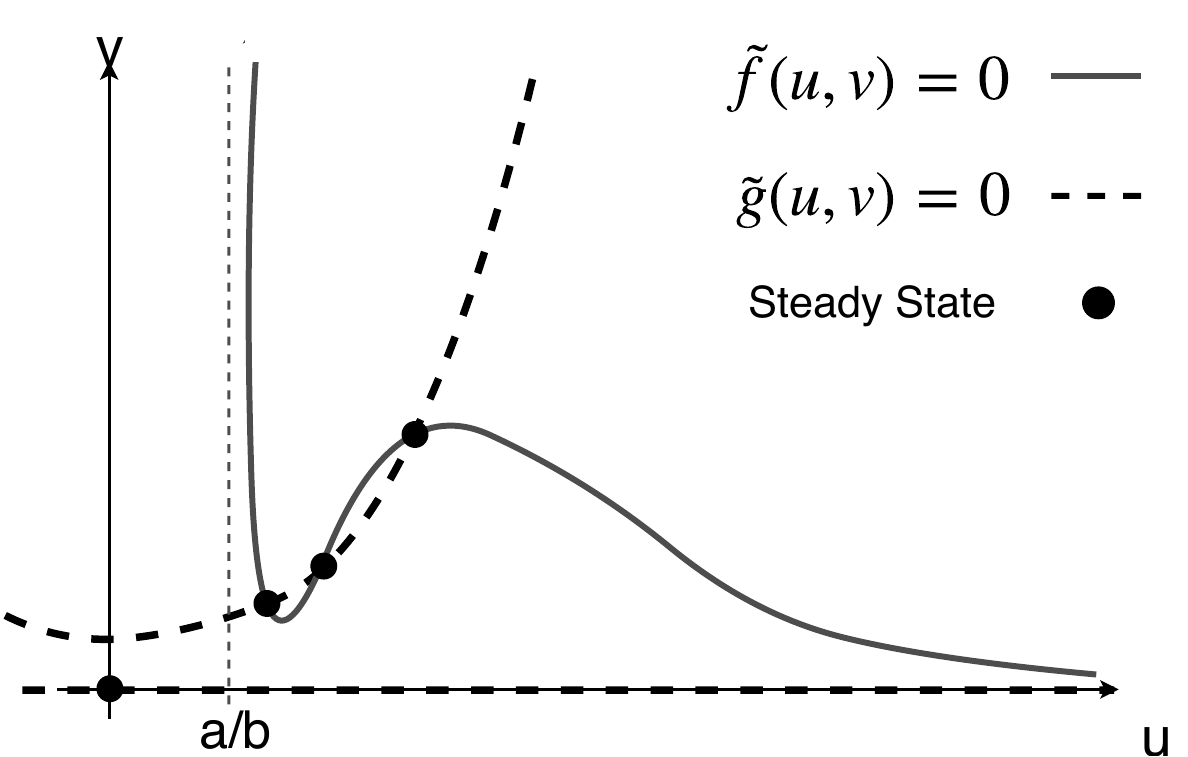}\\
    $(a)$ \hspace{4cm} $(b)$\\
    \includegraphics[scale=.24]{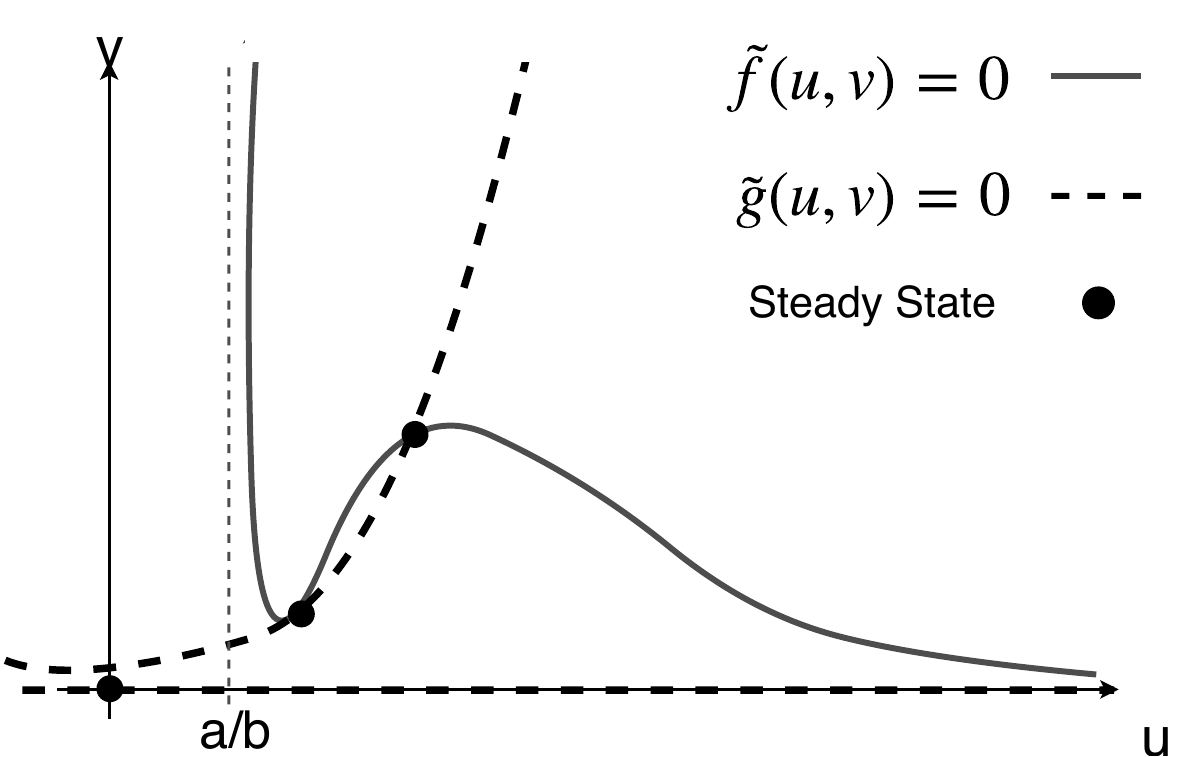}%
    \qquad
    \includegraphics[scale=.24]{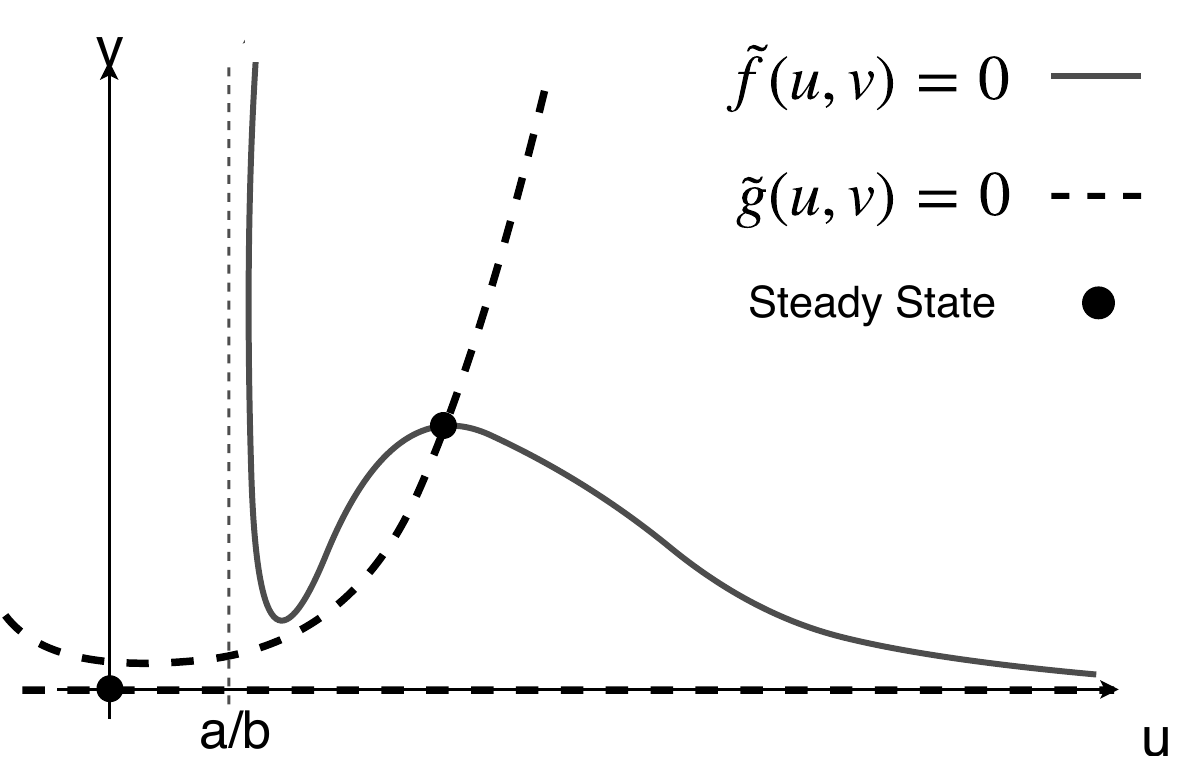}\\
    $(c)$ \hspace{4cm} $(d)$
\caption{Nullclines sketches for key parameter {values of~$e$, where $b>K>a$}. Panels (a) and (c) show three steady-states, where two are positive (within first quadrant) and the third one is the origin; in panel (b) three positive steady-states are depicted; and panel (d) shows only one positive steady-state. Panels (a) and (c) correspond to scenarios approximately at two saddle-node bifurcations.
Here $\tilde{f}(u,v)=0$ and $\tilde{g}(u,v)=0$ stand for $\dot{u}=0$ and $\dot{v}=0$ in \eqref{eq:4.11}, respectively.}
\label{Fig:4.2}
\end{figure}

%%%%%%%%%%%%%%%%%%%%%%%%%%%%%%%
\subsection{Number of steady-states} 
\label{sec:SteadySt1}
%%%%%%%%%%%%%%%%%%%%%%%%%%%%%%%

As the understanding of interactions between motile and static bacteria is the backbone of communication mechanism via autoinducers, we seek the circumstances affecting the way subpopulations coexist or go extinct. Hence, we first look for positive steady-states. Consider that nullclines of~\eqref{eq:4.10} satisfy
\begin{subequations}\label{eq:4.13}
\begin{gather}
v = \dfrac{u^2}{(1+Ku^2)(bu-a)}\,, \label{eq:4.13b}\\
v = u^2 +ae \,, \label{eq:4.13a}
\end{gather}
\end{subequations}
for $u,v>0$. Notice that~\eqref{eq:4.13a} defines a convex parabola with vertex at $(0,ae)$, and~\eqref{eq:4.13b} has an asymptote at $u = a /b $. {For values of $u< a/b$ , the corresponding $v$-values are negative, which is not biologically meaningful.} Now, we find that the nullcline~\eqref{eq:4.13b} can be {recast} as a smooth function of $u$, which is positive  and {possesses} critical points for $u>a/b$ as $v^\prime(u)=0$ is held. In so doing, we get that $-2a+b(u-Ku^3)=0$ must be satisfied. This yields to, as a consequence of the Descartes' rule of signs, existence of two positive roots such that $a/b<u_1^\star<u_2^\star$ as $a,b,K>0$, where the lower value corresponds to a local minimum and the greater one to a local maximum. Thus, from both expressions in~\eqref{eq:4.13}, we get that there are at most three positive steady-states. This is schematically illustrated in~Fig.~\ref{Fig:4.2} for a system defined later in~\eqref{eq:4.11} which is equivalent to~\eqref{eq:4.10} in the interior of the first quadrant and is well-defined along the axis $v=0$. These sketches provide sensitive evidence that two saddle-node bifurcations may occur as two steady-states are created by varying appropriate parameters; see panels~(a) and~(c), where the nullclines are tangent. 

%%%%%%%%%%%%%%%%%%%%%%%%%%%%%%%
\subsection{Local stability  of positive steady-states}
\label{sec:LinearSt1}
%%%%%%%%%%%%%%%%%%%%%%%%%%%%%%%

We now pay attention to the local stability of positive steady-states~$(u_*,v_*)$ in~\eqref{eq:4.10}. The Jacobian matrix of~\eqref{eq:4.10} at~$(u_*,v_*)$ is
\begin{align}\label{Jac:1}
\mathbf{J} = \left(\begin{array}{cc}
\dfrac{2u_*}{v_*\left(1+Ku_*^2\right)^2}-b & -\dfrac{u_*^2}{\left(1+Ku_*^2\right)v_*^2} \\
2u_*  & -1 \\
\end{array} \right)\,,
\end{align}
where, by setting $\mathcal{J}:=2u_*\left/\left[v_*\left(1+Ku_*^2\right)^2\right]\right.$, the trace is given by $\text{tra}(\mathbf{J}) = \mathcal{J}-b-1$, and the determinant by $\text{det}(\mathbf{J}) = b-\left(1-\left.u_*^2\right/\left[v_*\left(1+Ku_*^2\right)\right]\right)\mathcal{J}$.
Notice that the entries  $J_{12}$, and  $J_{22}$ of~\eqref{Jac:1} are negative, while $J_{21}$, is positive. Hence, local stability features depend on whether the first entry $J_{11}= \mathcal{J}-b$ is greater or less than one as follows from Hartman--Grobman theorem~\cite{guckenheimer}. This is summarised as follows: 

\begin{proposition}\label{teolocalstability}
Let be $\mathcal{J}:=2u_*\left/\left[v_*\left(1+Ku_*^2\right)^2\right]\right.$.
The local stability of a positive steady-state $(u_*,v_*)$ of~\eqref{eq:4.10} is as follows:
\begin{enumerate}
\item It is  locally asymptotically stable  if $\mathcal{J}< b+1$ and $b>\left(1-\left.u_*^2\right/\left[v_*\left(1+Ku_*^2\right)\right]\right)\mathcal{J}$.
\item It is a repeller  if $\mathcal{J}> b+1$ and $b>\left(1-\left.u_*^2\right/\left[v_*\left(1+Ku_*^2\right)\right]\right)\mathcal{J}$.
\item It is a saddle point if $b<\left(1-\left.u_*^2\right/\left[v_*\left(1+Ku_*^2\right)\right]\right)\mathcal{J}$.
\end{enumerate}
\end{proposition}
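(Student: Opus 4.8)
The plan is to read the stability type directly off the spectrum of the $2\times2$ Jacobian $\mathbf{J}$, which for a planar vector field is completely determined by the pair $(\text{tra}(\mathbf{J}),\det(\mathbf{J}))$. The eigenvalues $\lambda_{1,2}$ are the roots of the characteristic polynomial
\[
\lambda^2-\text{tra}(\mathbf{J})\,\lambda+\det(\mathbf{J})=0,
\]
so that $\lambda_1+\lambda_2=\text{tra}(\mathbf{J})=\mathcal{J}-b-1$ and $\lambda_1\lambda_2=\det(\mathbf{J})=b-\left(1-u_*^2/[v_*(1+Ku_*^2)]\right)\mathcal{J}$, using the trace and determinant already computed from~\eqref{Jac:1}. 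Since each of the three regimes in the statement renders $(u_*,v_*)$ hyperbolic, the Hartman--Grobman theorem~\cite{guckenheimer} then transfers the linear classification to the full nonlinear flow of~\eqref{eq:4.10}.

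First I would translate the hypotheses into sign conditions on the trace and determinant. The inequality $\mathcal{J}<b+1$ is precisely $\text{tra}(\mathbf{J})<0$, and $b>\left(1-u_*^2/[v_*(1+Ku_*^2)]\right)\mathcal{J}$ is precisely $\det(\mathbf{J})>0$; symmetrically, $\mathcal{J}>b+1$ gives $\text{tra}(\mathbf{J})>0$, while $b<\left(1-u_*^2/[v_*(1+Ku_*^2)]\right)\mathcal{J}$ gives $\det(\mathbf{J})<0$. I would then argue case by case from $\lambda_1\lambda_2=\det(\mathbf{J})$ and $\lambda_1+\lambda_2=\text{tra}(\mathbf{J})$. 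In Case~3, $\det(\mathbf{J})<0$ forces $\lambda_1,\lambda_2$ to be real with opposite signs, so $(u_*,v_*)$ is a saddle irrespective of the trace. In Cases~1 and~2, $\det(\mathbf{J})>0$ means that both eigenvalues share the same sign of their (nonzero) real part, whether they form a real pair or a complex-conjugate pair, and that common sign is fixed by $\text{tra}(\mathbf{J})$: a negative trace yields $\mathrm{Re}\,\lambda_{1,2}<0$ and hence an asymptotically stable equilibrium (Case~1), whereas a positive trace yields $\mathrm{Re}\,\lambda_{1,2}>0$ and hence a repeller (Case~2).

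I do not expect a genuine obstacle here, as the argument is the standard trace--determinant classification of planar equilibria; the only points requiring care are bookkeeping ones. I would verify that the node-versus-focus distinction, governed by the sign of the discriminant $\text{tra}(\mathbf{J})^2-4\det(\mathbf{J})$, is immaterial to the three stability verdicts, so it need not be resolved. I would also emphasise that the strict inequalities are exactly what guarantees hyperbolicity, and thus the applicability of Hartman--Grobman: the excluded borderline $\text{tra}(\mathbf{J})=0$ with $\det(\mathbf{J})>0$ produces a purely imaginary pair and is precisely the Hopf scenario treated separately in the bifurcation analysis of Section~\ref{sec:bifanal}, while $\det(\mathbf{J})=0$ is the saddle-node threshold already anticipated by the nullcline tangencies in Fig.~\ref{Fig:4.2}.
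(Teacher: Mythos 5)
Your proposal is correct and coincides with the paper's own (largely implicit) argument: the text preceding Proposition~\ref{teolocalstability} computes the Jacobian \eqref{Jac:1}, reads off $\text{tra}(\mathbf{J})=\mathcal{J}-b-1$ and $\det(\mathbf{J})$, and invokes the standard trace--determinant classification together with the Hartman--Grobman theorem, exactly as you do. Your case-by-case sign analysis and the remark that the strict inequalities are what secure hyperbolicity simply make explicit what the paper leaves to the reader.
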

{We now analyse the dynamical behaviour in the neighbourhood of the origin.}

%%%%%%%%%%%%%%%%%%%%%%%%%%%%%%%
\subsection{Dynamics near the origin}
\label{sec:origin}
%%%%%%%%%%%%%%%%%%%%%%%%%%%%%%%

 Despite vector field \eqref{eq:4.10} being defined in $\mathcal{D} = \{(u,v)\in\mathbb{R}^2 | \ u \geq 0, \, v >0 \}$, a solution with $v=0$ corresponds to absence of static bacteria, which is a feasible  biological scenario. In consequence, similarly to~\eqref{eq:s0} we consider a map $t\mapsto  t / v$ to get
\begin{equation}\label{eq:4.11}
Y_0:\left\{\begin{array}{rl}
 \dot{u} = \tilde{f}(u,v)\,, & \tilde{f}(u,v)=\dfrac{u^2}{1+Ku^2}+av-buv\,,\\
 \dot{v} = \tilde{g}(u,v)\,, & \tilde{g}(u,v)=u^2v+aev-v^2\,, 
\end{array}\right.
\end{equation}
which is topologically equivalent to (\ref{eq:4.10}) and can be continually extended to the boundary axis $v=0$.

\begin{proposition}\label{originstability}
The origin of~\eqref{eq:4.11} is a repeller.
\end{proposition}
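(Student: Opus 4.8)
The plan is to show that the origin of the extended system~\eqref{eq:4.11} is a repeller by linearising $Y_0$ at $(u,v)=(0,0)$ and verifying that both eigenvalues of the Jacobian have positive real part. First I would compute the Jacobian of $(\tilde f,\tilde g)$. Since $\tilde f(u,v)=u^2/(1+Ku^2)+av-buv$ and $\tilde g(u,v)=u^2v+aev-v^2$, the partial derivatives at the origin are straightforward: $\partial_u\tilde f=2u/(1+Ku^2)^2-bv$, $\partial_v\tilde f=a-bu$, $\partial_u\tilde g=2uv$, and $\partial_v\tilde g=u^2+ae-2v$. Evaluating at $(0,0)$ kills every term that carries a factor of $u$ or $v$, leaving the triangular matrix
\begin{equation}\label{eq:Jorigin}
\mathbf{J}_0=\left(\begin{array}{cc} 0 & a \\ 0 & ae \end{array}\right).
\end{equation}

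The obstacle here is immediate and must be addressed head-on: $\mathbf{J}_0$ is degenerate, with eigenvalues $0$ and $ae>0$, so the linearisation is non-hyperbolic and the Hartman--Grobman theorem does not apply. Thus a purely linear argument cannot classify the origin, and this is the main difficulty. My plan is therefore to resolve the dynamics along the centre direction (the $u$-axis, which is the eigenvector for the zero eigenvalue) by a direct examination of the nonlinear terms, while the eigenvalue $ae>0$ already guarantees exponential repulsion in the transverse ($v$) direction.

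To handle the centre direction I would invoke the invariant structure already established for the analogous system~\eqref{eq:s0}. Exactly as in Section~\ref{sec:s0}, the axis $v=0$ is invariant for~\eqref{eq:4.11}, since $\tilde g(u,0)=0$; on this axis the reduced flow is $\dot u=u^2/(1+Ku^2)\geq 0$, which is strictly positive for $u>0$, so orbits on the $u$-axis move away from the origin. For the behaviour off the axis, I would study the sign of $\tilde g$ in a punctured neighbourhood of the origin within $\mathcal D$: for $0<u,v$ sufficiently small we have $\tilde g(u,v)=v(u^2+ae-v)>0$ because the factor $u^2+ae-v$ is close to $ae>0$, so $v(t)$ is increasing near the origin and no interior trajectory can converge to $(0,0)$ as $t\to+\infty$. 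Combining the transverse repulsion coming from the positive eigenvalue $ae$ with this monotone growth of $v$ and the outward push $\dot u>0$ on the centre manifold, I would conclude that every trajectory starting close to the origin in $\mathcal D$ is eventually driven away, so the origin is a repeller.

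Finally, since~\eqref{eq:4.11} is topologically equivalent to~\eqref{eq:4.10} in the interior of the first quadrant, the same conclusion transfers back to the original constant-autoinducer system~\eqref{eq:4.10}. The crux of the argument is recognising that the classification is \emph{not} a hyperbolic one and supplying the centre-direction analysis via the invariance of $v=0$ and the sign of $\tilde g$; the transverse eigenvalue computation is routine once $\mathbf{J}_0$ is in hand.
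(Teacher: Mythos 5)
Your proof is correct and follows essentially the same route as the paper: both identify the non-hyperbolic linearisation with eigenvalues $0$ and $ae>0$ and then resolve the centre direction, where the reduced flow is $\dot u=u^2+O(u^3)$ and hence repelling for $u>0$. The only real difference is that you recognise the invariant axis $v=0$ directly as a centre manifold (making the reduced dynamics explicit, $\dot u=u^2/(1+Ku^2)$), whereas the paper computes a centre manifold expansion $h(u)=a_2u^2+O(u^3)$ via the invariance equation and finds $a_2=0$; your supplementary sign argument $\tilde g(u,v)=v(u^2+ae-v)>0$ near the origin is a harmless elementary addition that makes the ``repeller'' conclusion concrete.
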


\begin{proof}
 Since the eigenvalues of the Jacobian matrix at the origin are $\lambda_1=0$ and $\lambda_2=ae>0$, with associated eigenvectors $v_1=(1,0)^t$ and $v_2=(1/e,1)^t$, respectively, it follows that there exists a centre manifold
$$W^c=\{(u,v):\ v=h(u):=a_2u^2+O(u^3),\ |u|<\delta\},$$
for $\delta>0$ sufficiently small~\cite{guckenheimer}.  Since $W^c$ is invariant one must have
$$\tilde{g}(u,h(u))\equiv h'(u)\tilde{f}(u,h(u)),$$
for every $(u,v)\in W^c$. It follows from this identity that $a_2=0$ and $h(u)=O(u^3)$. Hence, the dynamics of~\eqref{eq:4.11} restricted to $W^c$ is equivalent to $\dot x=x^2+O(x^3).$ Therefore, the origin of~\eqref{eq:4.11} is a repeller.
\end{proof}

%%%%%%%%%%%%%%%%%%%%%%%%%%%%%%%
\section{Bifurcation analysis  in the 2D system}
\label{sec:bifanal}
%%%%%%%%%%%%%%%%%%%%%%%%%%%%%%%

 We now perform a numerical exploration with {\sc MatCont}~\cite{book:Govaerts2011} in order to identify the most relevant bifurcations when parameters $e$ or $b$ are  varied slowly: Upon varying  $b$, different decaying bacteria rate scenarios are explored; meanwhile, variations of $e$ reveal the impact of having several response rates as well as different production rates. We focus our analysis on the way bacteria decaying rates coordinate the emergence of oscillations. 

Figure~\ref{Fig:4.5} depicts bifurcation diagrams of system~\eqref{eq:4.10} for parameters $e$ and $b$. In the left-hand panel,  two saddle-node~(SN) points are identified at $e_1$ and $e_2$ (see Fig.~\ref{Fig:4.5}(a)), where the system {possesses} only one unstable or stable steady-state for $e<e_1$ and $e>e_2$, respectively, { and three coexisting equilibrium points for $e_1<e<e_2$. Note that the case $e=e_1$ corresponds to the nullcline arrangement in Fig.~\ref{Fig:4.2}(a); the scenario with $e_1<e<e_2$ relates to Fig.~\ref{Fig:4.2}(b); the case $e=e_2$ corresponds to Fig.~\ref{Fig:4.2}(c); and $e>e_2$ is associated with Fig.~\ref{Fig:4.2}(d)}. On the other hand, in Fig.~\ref{Fig:4.5}(b), {after setting $e=3$} two SN points are shown at $b_1$ and $b_2$ in addition to a Hopf point (denoted by H, for simplicity) at~$b_3$. {In both panels of Fig.~\ref{Fig:4.5}, the black curves represent asymptotically stable equilibria, the discontinuous branch between the two SN points is a saddle point, and the upper dashed curve is a repeller.} This Hopf bifurcation  is  supercritical as the first numerically computed Lyapunov coefficient is negative. {Hence, for $b_3<b<b_4$ a stable limit cycle arises  and coexists with the stable equilibrium at the lower branch. This paves the way for the emergence of synchronising dynamics under suitable initial conditions, as a result of the limit-cycle branch that originates from the H point and terminates at the lightly dashed saddle branch at $b_4$, suggesting the potential occurrence of homoclinic orbits.}

\begin{table}[t]
\centering
  \begin{tabular}{ |c|c|c|c|c|c|c|c|}
    \hline
    $a$ &  $b$ & $e$ &$K$ \\ \hline
    $[0,0.15]$ & $[0.5,3]$ & $[0,5]$& $0.043$ \\ \hline
%     $0.042$ & $1.400$ & $[0,5]$& $0.043$ \\ \hline
  \end{tabular}
  \caption{Parameter values used in section~\ref{sec:bifanal}.}
  \label{table:Ap1}
\end{table}

\begin{figure}[t!]
    \centering
    \includegraphics[scale=1]{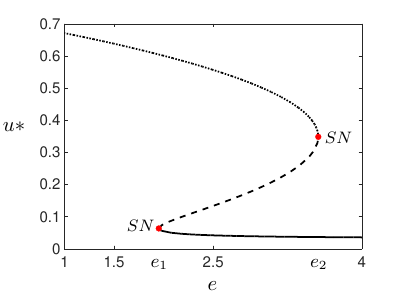}
    \includegraphics[scale=1]{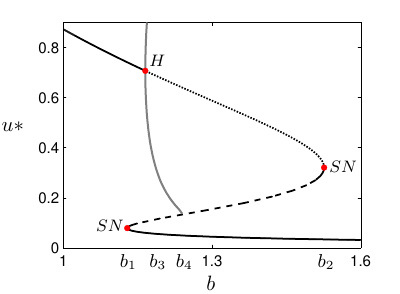} \\
     $(a)$ \hspace{6.5cm} $(b)$
\caption{Bifurcation diagrams for bifurcation parameters $e$ and $b$. {Dashed and solid curves correspond to unstable and stable steady-states, respectively, where heavily dashed branches gather repeller steady-states while saddle steady-states are collected by slightly dashed branches, and solid grey branch amass stable limit-cycles. (a)~With $b=1.4$ and $a=0.042$ fixed, we identify two saddle-node (SN) points located at $e_1\approx1.948551$ and $e_2\approx3.497398$}. (b)~Upon fixing $e=3$ { and $a=0.042$}, we find two SN points at $b_1\approx1.128239$ and $b_2\approx1.512641$ as well as a Hopf (H) point  at $b_3=1.168852$ { which terminates at $b_4\approx1.239672$. Parameter value for $K=0.043$.}} 
\label{Fig:4.5}
\end{figure}

{A two-parameter continuation is conducted following Fig.~\ref{Fig:4.5} to investigate how the bifurcations evolve with the simultaneous variation of both parameters. From the SN points in Fig.~\ref{Fig:4.5}(a) as parameters  $e$ and $a$ are slowly varied reveals a cusp point (CP) and a Bogdanov--Takens point (BT); see Fig.~\ref{Fig:4.6}. 
Additionally, the two-parameter continuation with respect to both $b$ and $e$, starting from the H point, reveals a Bogdanov--Takens bifurcation as the organising centre of the dynamics, as shown in Fig.~\ref{Fig:4.8}}. 
Upon taking into account Proposition~\ref{originstability}, the parameter space of interest for $b$ and $e$ can be divided into the following regions: 

\begin{figure}[t!]
    \centering
    \includegraphics[scale=.35]{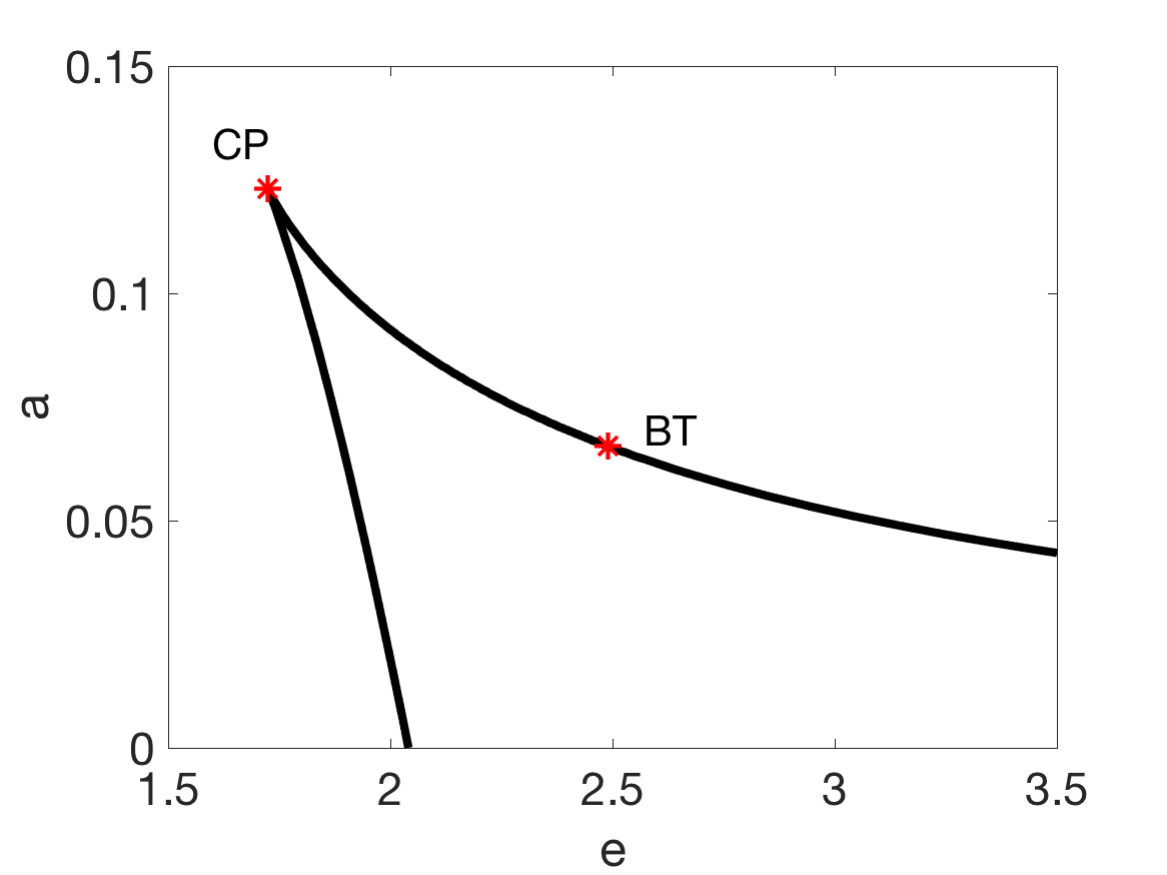} 
\caption{Two parameter continuation. The solid curve corresponds to the SN locus. A CP point is located at $(e,a)\approx(1.723644,0.123118)$ and a BT point at $(e,a)\approx(2.490826,0.066303)$. {Other parameters values: $b=1.4$ and $K=0.043$.}}
\label{Fig:4.6}
\end{figure}

\begin{figure}[t!]
    \centering
    \includegraphics[scale=.5]{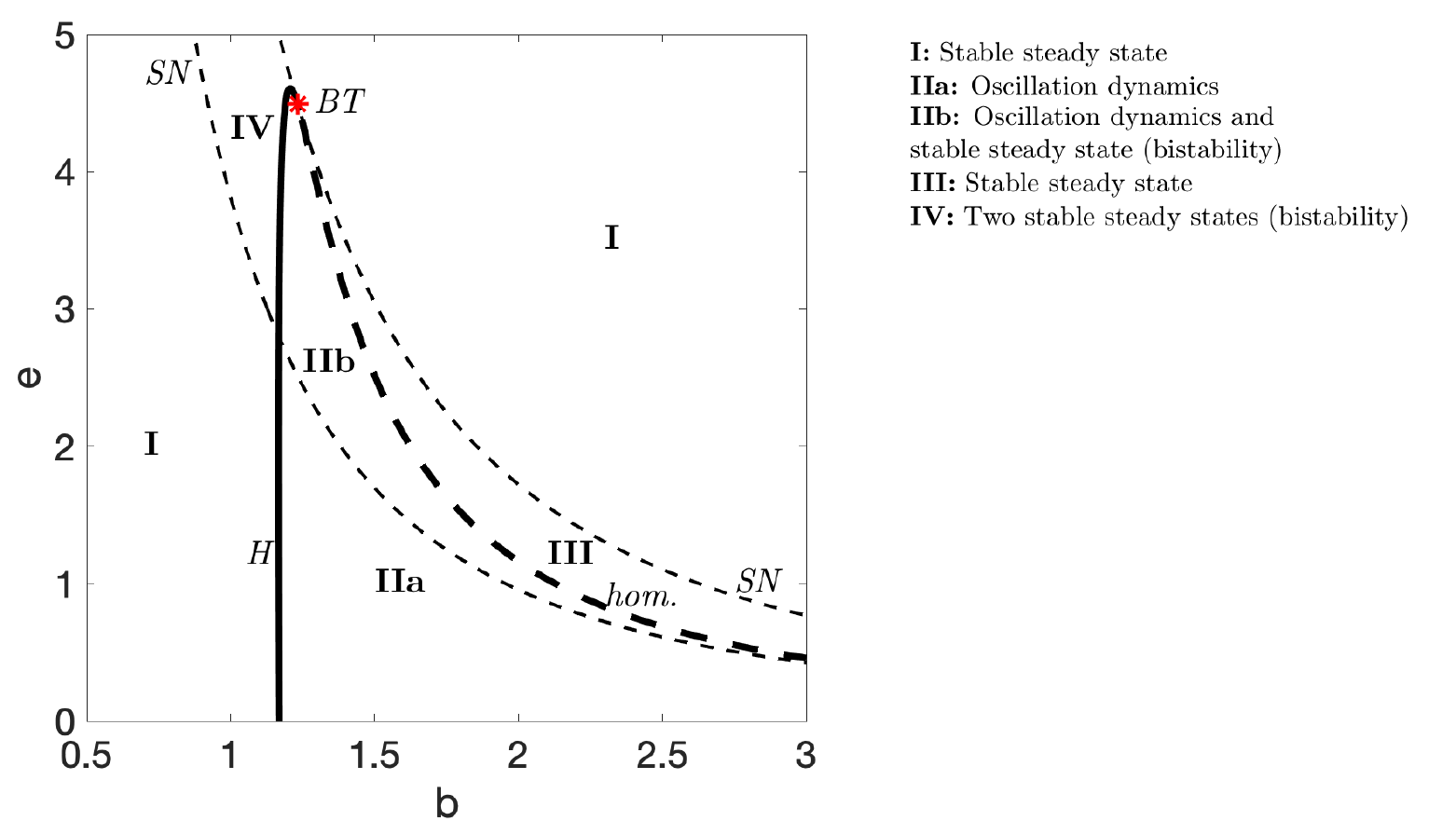}
\caption{Two-parameter continuation for parameters $b$ and $e$. Four possible scenarios are categorised as follows. \textbf{I}:~\textsl{Stable steady-state}, the population converges to a positive population density; \textbf{IIa}:~\textsl{Oscillation dynamics}, the bacteria approach a periodic solution; \textbf{IIb}:~\textsl{Oscillation dynamics and stable steady-state (bistability)}, this region shows oscillation dynamics and a stable node. The bold dashed curve captures homoclinic nonlinear transitions (\textsl{hom}); \textbf{III}:~\textsl{Stable steady-state}, no periodic solutions arise as orbits asymptotically converge to a steady state; \textbf{IV}:~\textsl{Two stable steady-states (bistability)}, there are two stable steady-states, a node and a focus, which are separated by a saddle point. The remaining parameters are set to: $a=0.042$ and $K=0.043$.
}
\label{Fig:4.8}
\end{figure}

%%
%Two-parameter continuation for parameters $b$ and $e$. Four possible scenarios are categorised as follows.\textbf{I}:~\textsl{Stable steady-state}, the population converges to a positive population density; \textbf{IIa}:~\textsl{Oscillation dynamics}, the bacteria approach a periodic solution which may be a consequence of a strong communication among the bacteria; \textbf{IIb}:~\textsl{Oscillation dynamics and stable steady-state (bistability)},this region shows oscillation dynamics and a stable node. The bold dashed curve captures homoclinic nonlinear transitions (\textsl{hom}); \textbf{III}:~\textsl{Bistability}, there exist two positive steady-states, hence, the population converges to either one of these states depending on initial conditions. The remaining parameters are set to: $a=0.042$ and $K=0.043$.
%%
{
\begin{enumerate*}[label=(\roman*)]
\item[{\bf (I)}] Stable steady-state: this region, though disconnected, exhibits a stable steady state, while the origin remains unstable. For values of $b$ close to zero, the positive steady state is a stable node. As~$b$ increases slightly, this state transitions into a stable focus, driven by a shift from negative real eigenvalues to complex eigenvalues with negative real parts.%there is  a stable steady-state and the origin is unstable; for values of $b$ close to naught the positive steady-state is a stable node and is a stable focus for slightly larger values, which is a consequence of a transition from negative real eigenvalues to complex ones with negative real part. 
\item[{\bf (II)}] Oscillation dynamics: the emergence of positive, stable periodic solutions plays a critical role in shaping the synchronised dynamical behaviour of bacterial populations. This phenomenon may stem from strong inter-bacterial communication. Notably, the lower SN dashed curve divides this region into two distinct subregions: in subregion~{\bf (IIa)}, self-sustained oscillations dominate, whereas in subregion~{\bf (IIb)} (bistability), a stable steady state coexists with a limit cycle.%the emergence of positive, stable periodic solutions plays a pivotal role in shaping the synchronised dynamical features of bacterial populations. This phenomenon may arise as a consequence of robust inter-bacterial communication. Notably, the lower SN dashed curve delineates this region into two distinct subregions: in subregion {\bf (IIa)}, self-sustained oscillations are observed, whereas in subregion {\bf (IIb)}, a stable steady-state coexists with a limit cycle.
\item[{\bf (III)}] Stable steady-state: no limit cycle is observed on the right-hand bold dashed curve associated with the homoclinic bifurcation originating from the BT point. %no limit cycle occur on the right-hand bold dashed curve of homoclinic bifurcation emerging from the BT point; 
\item[{\bf (IV)}] Two stable steady-states (bistability): this region contains three steady states: a stable node and a stable focus, separated by a saddle point. Consequently, the population converges to one of these two stable states depending on the initial conditions.%this region comprises three steady states: a stable node and a stable focus, separated by a saddle point; hence, the population converges to either one of these stable states depending on initial conditions.
\end{enumerate*}
}

Notice that Fig.~\ref{Fig:4.8} also shows that the larger the value of $e$, the smaller the range of values of $b$ that can sustain oscillations. In other words, as $e$ surpasses a certain value no synchronising dynamics are possible, no matter the value of $b$. Parameter $e$ plays a bacteria production role in the model, it is directly proportional to the fraction of static bacteria production rate and inversely proportional to the constant autoinducer concentration. This could indicate that the oscillatory dynamics of the population is dependent on the interaction between the decaying and production rates of the bacteria or the decaying rates and the autoinducer concentration. The latter scenario will be explored throughout section \ref{sec:bifanal3D}.

%%%%%%%%%%%%%%%%%%%%%%%%%%
\subsection{Hysteresis and synchronisation}
%%%%%%%%%%%%%%%%%%%%%%

Now, as we have found such key dynamical events, which are characterised by parameters~$b$ and $e$, {related to autoinducers concentration and bacterial decaying rates, as can be seen in~\eqref{eq:4.10c},} we illustrate these transitions by considering a time-dependent varying variable $b=b(t)$.

%%%%%%%%%%%%%%%%%%%%%%%%%%%%%%%%%
\subsubsection{Linearly varying $\boldsymbol{b}$}\label{Sec:2.Bif1a}
%%%%%%%%%%%%%%%%%%%%%%%%%%%%%%%%%

The model shows hysteresis as $b$ is varied linearly forth and backwards. For instance, in Fig.~\ref{Fig:4.11a}(a) we fix $e=1.0$ and continuously vary linearly $b=1.0$ up to $b=3.0$. As can be seen, the solution decays in oscillatory fashion  for values of $b$ small enough { (region {\bf I})}; then, as $b$ crosses a corresponding Hopf bifurcation value, the solution amplitude starts increasing approaching a stable limit cycle { (region {\bf II})}; then it finally approaches a stable steady-state after the homoclinic bifurcation { (regions {\bf III} and {\bf I})}. In Fig.~\ref{Fig:4.11a}(b), parameter~$b$ is traversed backwards, which yields a solution that remains in a stable node steady-state throughout the first two regions it crosses { (regions {\bf I} and {\bf III})}. As it reaches the Oscillation dynamics region, the solution spikes and approaches a limit cycle { (region {\bf IIa,b})}. Finally, the solution begins to show a decaying amplitude oscillatory behaviour { (region {\bf I})}. 

 \begin{figure}[t]
  \centering
  \includegraphics[scale=.3]{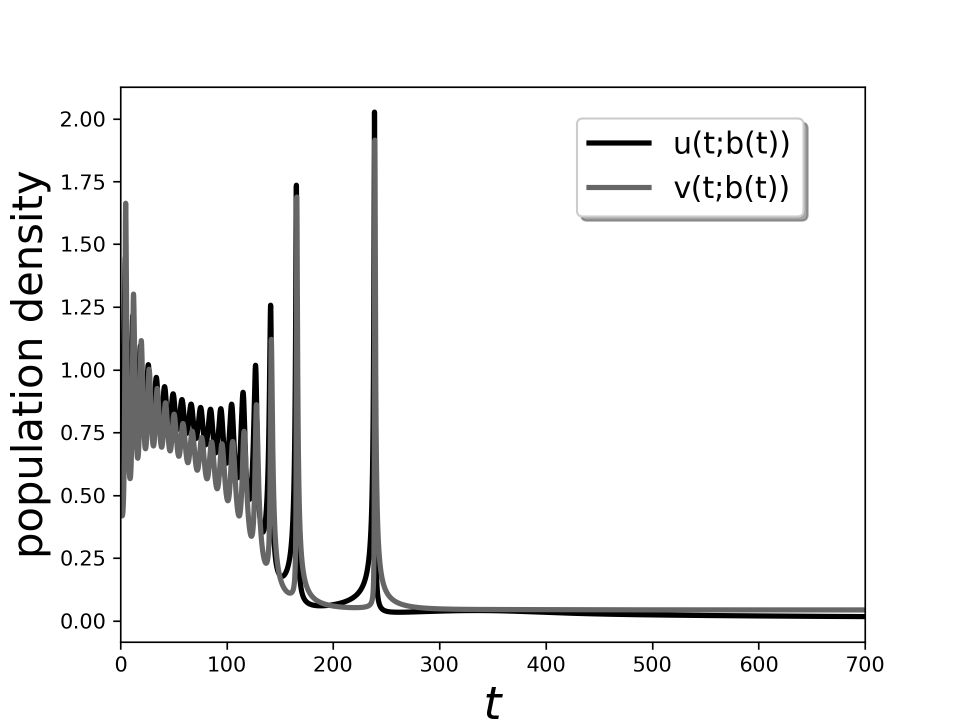}
  \includegraphics[scale=.3]{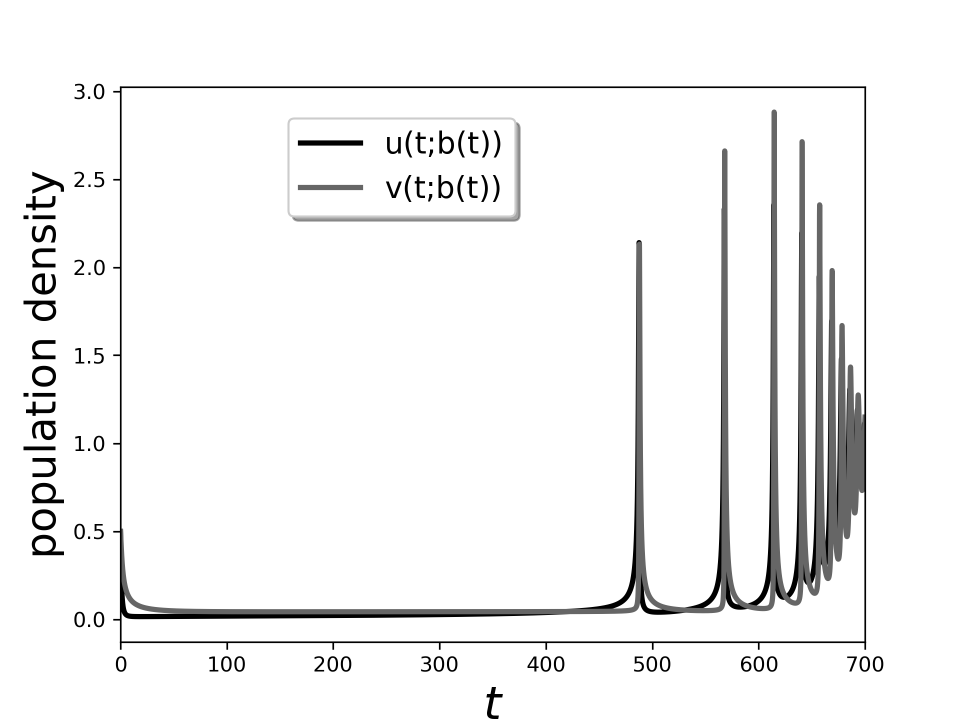} 
  \includegraphics[scale=.3]{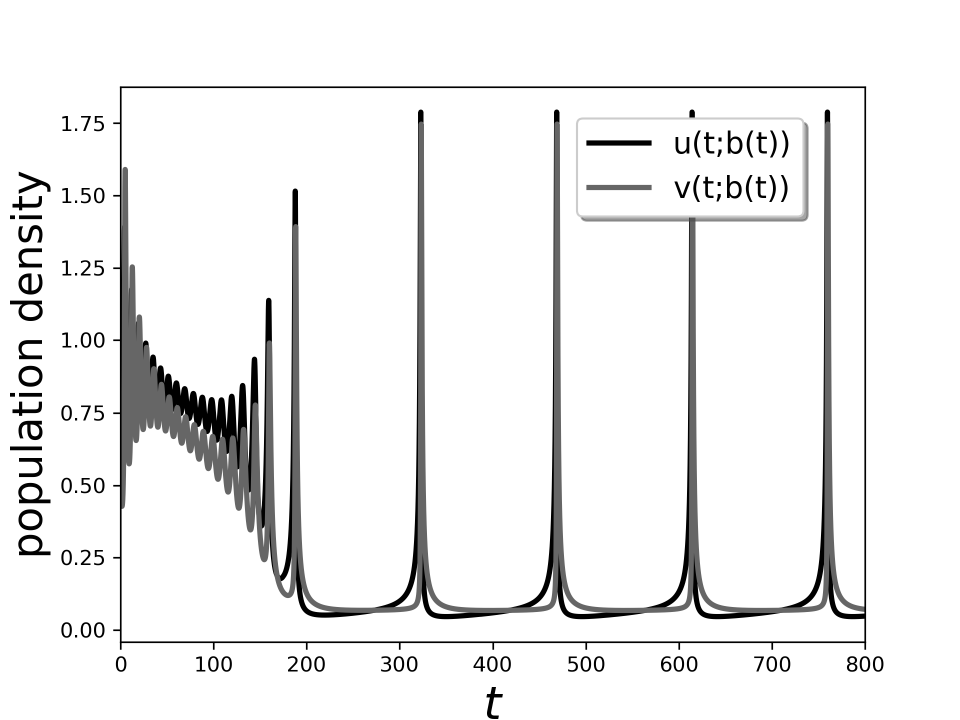}\\
   $(a)$ \hspace{4.2cm} $(b)$ \hspace{4.2cm} $(c)$
\caption{A solution sample for time-dependent parameter $b$: (a)~a solution with $b$ linearly ranging from $b=1$ to $b=3$ with $e=1$ fixed; (b)~a solution with parameter $b$ going backwards from $b=3$ to $b=1$  with $e=1$ fixed; (c)~parameter $b$ linearly increases from $b=1.0$ to $b=1.5$  with $e=1.5$ fixed. Initial conditions are set to $(u,v)=(0.5,0.5)$. {Other parameters values: $a=0.042$ and $K=0.043$.}}
\label{Fig:4.11a}
\end{figure}

Notice that in Fig.~\ref{Fig:4.11a}(b), the solution's behaviour is rather unsimilar to the one when~$b$ increases (see~Fig.~\ref{Fig:4.11a}(a)), which indicates the existence of hysteretic-like features in the system as $b$ is varied forth and backwards. {In other words, the solutions depart from and return to the same initial state, exploring different dynamical regimes depending on the parameter pathways. These solutions exhibit both oscillatory and non-oscillatory behaviours with distinct characteristics, suggesting a robust feature of the system's interaction. This resilience may be interpreted biologically as the bacteria population ability to preserve functionality despite external and internal perturbations, as highlighted in, for example,~\cite{article:Kitano2004}.} Varying $b$ from $b=1.0$ to $b=3.0$ and backwards illustrates how the dynamics of the solution change for different decaying bacteria rate ratios for a constant value of~$e$. When $b$ values are close to one, the decaying rates of constant and static bacteria are similar. As $b$ grows, the difference between the decaying rates of the bacteria increases.

On the other hand, observe that in Fig.~\ref{Fig:4.11a}(c), parameter $e=1.5$  is kept fixed and $b$ varies from $b=1.0$ up to $b=1.5$ and remains at this value as the solution converges to a periodic solution { (regions {\bf I} and {\bf IIa})}. This periodic behaviour shows that the concentration of motile bacteria slightly rises before the static bacteria concentration and also, its exponential decay is faster than the static population one.

%%%%%%%%%%%%%%%%%%%%%%%%%%%%%%%%%
\subsubsection{Periodically varying $\boldsymbol{b}$}\label{Sec:2.Bif1b}
%%%%%%%%%%%%%%%%%%%%%%%%%%%%%%%%%

{The bifurcation diagram in Fig.~\ref{Fig:4.8} highlights the critical role of parameter $b$ in the emergence of oscillatory behaviour, reflecting the impact of time-dependent variations in bacterial decay rates.} We therefore proceed to illustrate crucial consequences as~$b$~varies periodically back and forth between $b=1.0$ and $b=3.0$ in a given period of time for a fixed $e$~value. In doing so, we will gain insight on the effect of bacteria decay rates that vary in time. We consider an interval given by $t \, \in \, [0,2000]$ and a periodic decay rate parameter given by $b(t)=A+B\sin(\omega t + \varphi)$, where $A$ and $B$ determine parameter bounds and $\omega$ and $\varphi$ are the frequency and phase at which $b$ varies. Fig.~\ref{Fig:IdaVuelta} shows three scenarios for $b$ varying at different frequencies. These simulations suggest that, in order to observe meaningful dynamics, the value of $\omega$ must be small.
For small~$\omega$,  the solution will indeed show oscillations with a significantly small amplitude, but as the value of $\omega$ increases, {the solution will behave as a constant state intermittently disrupted by spikes over time.} In Fig.~\ref{Fig:IdaVuelta} the solutions for $\omega = 0.010,\, 0.027$ and $0.044$ are depicted. For $\omega= 0.010$, we see that as $b$ starts to increase, the solution oscillates regularly. 
 As $b(t)$ varies periodically the solutions alternate between periodic behaviour, oscillatory decay, and constant. That is, as $b$ increases and  the solution enters the oscillation region in Fig.~\ref{Fig:4.8}, the amplitude of the oscillations increases to remain constant. Notice that, as $b$ starts to decrease, the solution becomes constant until it re-enters the oscillation region, when it spikes. This is characterised by the valley-like windows; see upper panel. When $\omega=0.027$ and $0.044$ we see that, as parameter~$b$ takes decreasing values, the solution spikes as $b$ enters the oscillation region. However, as $\omega$ increases, the stable spiral behaviour lasts less time, until it is no longer present in the solution. These sudden spikes in the solution indicate excitatory dynamics in the system. The previous results convey that the dynamics of the 2D-system~\eqref{eq:4.11} is sensible to fast varying parameter $b$, which suggests that the timescale at which the bacteria decay rates change has a rather direct impact on the possible emergence of synchronisation dynamics. 

\begin{figure}[t!]
  \centering
  \includegraphics[scale=.45]{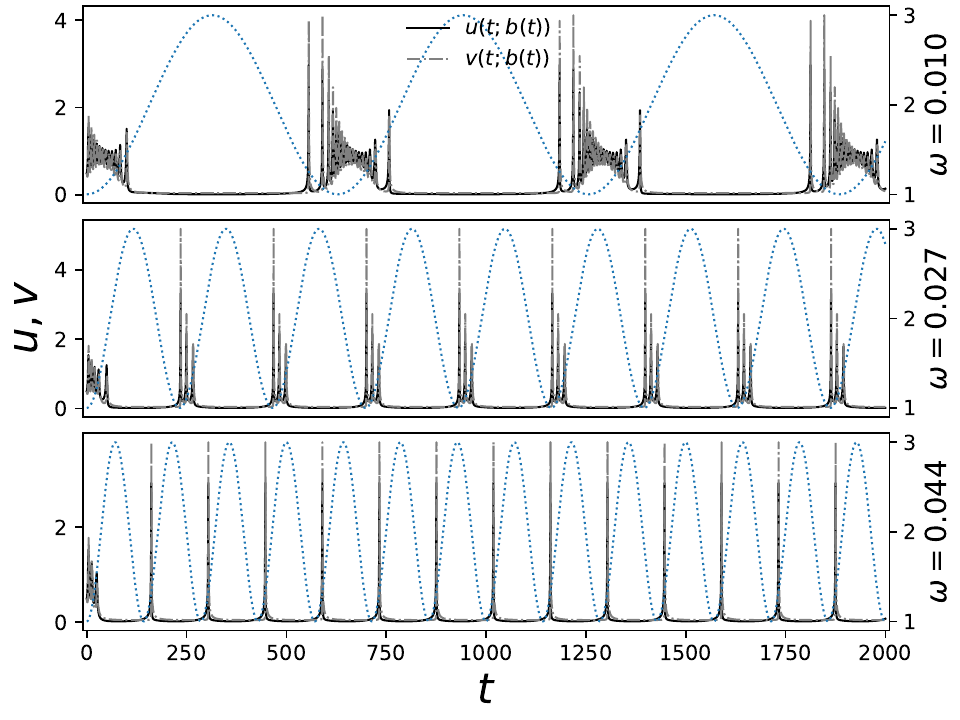}
\caption{A sample solution for $e=1.0$ and $b$ varying between one and three  at  distinguished frequencies. Parameter $b(t)$ is depicted by the dotted curve for  $A= 2$, $B=1$ and $\varphi=3\pi/2$. The solution is shown for three different frequency values as is shown in each panel. {Other parameters values: $a=0.042$ and $K=0.043$.}}
\label{Fig:IdaVuelta}
\end{figure}

%%%%%%%%%%%%%%%%%%%%%%%%%%%%%%%
\subsection{Andronov--Hopf bifurcation in the 2D system}\label{sec:hopf}
%%%%%%%%%%%%%%%%%%%%%%%%%%%%%%%

We now proceed to compute the normal forms for the Hopf bifurcation as is crucial for the dynamical shaping for the distinguished parameter values of Table~\ref{table:Ap1}. 

Let $(U,V)\in\mathbb{R}^2_+$ be the positive coordinates of an equilibrium of \eqref{eq:4.10} in the first quadrant. Then the set of equations $f(U,V)=0$ and $g(U,V)=0$
define implicitly a locally invertible transformation given by $\Psi:\Lambda \longrightarrow\mathbb{R}^4_+$,
\begin{flalign}\label{eq:Psi}
(U,V,b,K)\mapsto(a,e,b,K):=&\left(\dfrac{U \big(b (1 + K U^2) V-U\big)}{V + K U^2 V}, \dfrac{(V-U^2) (V + K U^2 V)}{U \big(b (1 + K U^2) V-U\big)}, b,K\right),
\end{flalign}
in $\Lambda:=\{(U,V,b,K)\in\mathbb{R}^4_+:\,b (1 + K U^2) V-U>0,\,\, V-U^2>0\}.$
Then the vector field \eqref{eq:4.10} in parameter space $\Lambda$ has the form:
\begin{equation}\label{hopf1}
\left\{
\begin{aligned}
x' &=	 \dfrac{x^2}{y + K x^2 y} + \dfrac{U \big(b (1 + K U^2) V-U\big)}{V + K U^2 V} - b x ,  \\
y' &=	 x^2 + V -U^2 - y, 
\end{aligned}
\right.
\end{equation}
where, for convenience, we use notation $(x,y)$ to name the state variables, and we denote $x'=\textrm{d}x/\textrm{d}t$, $y'=\textrm{d}x/\textrm{d}t$. System \eqref{hopf1} is $C^{\infty}$-equivalent to \eqref{eq:4.10} in parameter space  $\Lambda$. Moreover, the (positive) equilibrium coordinates appear now as the explicit parameters $(U,V)$. In what follows in this section, we will give conditions such that  \eqref{hopf1} undergoes a generic Hopf bifurcation at $(U,V)$; see \cite{guckenheimer,kuznetsov} for more details.

The Jacobian matrix of \eqref{hopf1}  at $(U,V)$ is
\begin{equation}J(U,V)=\begin{pmatrix} \label{jacUV}
-b+\dfrac{2 U}{(1 + K U^2)^2 V}  & \dfrac{-U^2}{(1 + K U^2) V^2} \\
&\\
2U & -1 \end{pmatrix}.
\end{equation}
The trace $T$ and determinant $D$ of $J(U,V)$ are given by
\begin{equation}\label{trace}
T=T(U,V,b,K)=-1 -b+\dfrac{2 U}{(1 + K U^2)^2 V}
\end{equation}
and
$D=\det\left[J(U,V)\right]= (V + K U^2 V)^{-2} \hat{D}$, respectively, where
\begin{equation}\label{det}
\hat{D}=\hat{D}(U,V,b,K)=2 U^3 + 2 K U^5 - 2 UV + b V^2 + 2 b K U^2 V^2 + b K^2 U^4 V^2.
\end{equation}
Whenever $T=0$ and $D>0$, the eigenvalues of $J(U,V)$ are purely imaginary and non-trivial. Moreover, since we have partial derivative $T_b(U,V,b,K)=-1<0,$
the Hopf bifurcation in \eqref{hopf1} is generically unfolded by parameter $b$. In particular, it follows from there that equation $T(U,V,b,K)=0$ implicitly defines the function $\displaystyle b(U,V,K)=-1 +\dfrac{2 U}{(1 + K U^2)^2 V}$.

We now calculate the  first Lyapunov quantity \cite{guckenheimer,kuznetsov} in order to determine genericity conditions. We follow
the derivation in \cite{guckenheimer} and move the equilibrium $(U,V)$ of the system \eqref{hopf1} to the origin via the translation $x\mapsto x+U$, $y\mapsto y+V$ to obtain the equivalent system
\begin{equation}\label{hopf2}
\left\{
\begin{aligned}
x' &=	 U \left(\dfrac{b - U}{V + K U^2 V}\right) - b (U + x) + \dfrac{(U + x)^2}{V + y + K (U + x)^2 (V + y)},  \\
y' &=	 2 U x + x^2 - y\,.
\end{aligned}
\right.
\end{equation}
In particular, the Jacobian matrix of \eqref{hopf2} at the equilibrium $(0,0)$ coincides with $J(U,V)$ in  \eqref{jacUV}.
 Upon substituting the function $b(U,V,K)$ into $J(U,V)$, we get
\[J_H(U,V)=\begin{pmatrix} 
1  & \dfrac{-U^2}{(1 + K U^2) V^2} \\
&\\
2U & -1 \end{pmatrix},
\]
with $T_H:=\trace\left[J_H(U,V)\right]\equiv0$ and $\det \left[J_H(U,V)\right]=D_H:=D|_{T=0}=\dfrac{2 U^3 - V^2 - K U^2 V^2}{(1 + K U^2) V^2}.$
 
If $D_H>0$, then $\mathbf{v}_1=\left(\dfrac{1}{2U},1\right)^t$ and $\mathbf{v}_2=\left(-\dfrac{w}{2U}, 0\right)^t$ are the generalised eigenvectors of~$J_H(U,V)$,
where $w=\sqrt{D_H}$.
The change of coordinates $\left(x,y\right)^t\mapsto [\mathbf{v}_{1} \, \mathbf{v}_{2}] \left(x, y\right)^t $, where $t$ stands for transpose, allows us to express system (\ref{hopf2}) with $T_H=0$ in the form
\begin{equation}\label{sisparen}
\left(\begin{array}{c}
x'\\ 
y'
\end{array} \right)=\left( \begin{array}{cc}
0 & -w\\ 
w & 0
\end{array} \right)\left(\begin{array}{c}
x \\ 
y
\end{array} \right)+\left(\begin{array}{c}
P(x,y) \\ 
Q(x,y)
\end{array} \right),
\end{equation}
where
\begin{subequations}\label{valorPyQ}
\begin{equation}
P(x,y)=\dfrac{1}{4 U^2}x^2 - \dfrac{w  }{2 U^2} x y+ \dfrac{w^2 }{4 U^2}y^2,
\end{equation}
and
\begin{equation}
\begin{array}{rl}
Q(x,y)=& \dfrac{-16 K U^7 - 8 K^2 U^9 - 2 U V^2 + V^3 + 3 K U^2 V^3 + 3 K^2 U^4 V^3}{4 U^2 (V + K U^2 V)^3 w}x^2\\
& + \dfrac{ K^3 U^6 V^3 + 8 U^5 (-1 + K V) + 2 U^3 V (4 + 3 K V)}{4 U^2 (V + K U^2 V)^3 w}x^2\\
&+\dfrac{-4 K U^5 + 2 U V - V^2 - 3 K U^2 V^2 - 3 K^2 U^4 V^2 - K^3 U^6 V^2 -
 U^3 (4 + 6 K V)}{2 U^2 (1 + K U^2)^3 V^2}xy
\\
&+\dfrac{(-2 U + 6 K U^3 + V + 3 K U^2 V + 3 K^2 U^4 V + K^3 U^6 V) w}{4 U^2 (1 + K U^2)^3 V}y^2\\
&+ \dfrac{ 12 K^2 U^8 + 4 K^3 U^{10} - 4 K U^6 (-3 + K V) + V^2 (1 + 2 K V)}{2U (V + K U^2 V)^4 w}x^3\\
&+    \dfrac{U^4 (4 - 8 K V - 3 K^2 V^2) - 2 U^2 V (2 + K V + K^2 V^2)}{2U (V + K U^2 V)^4 w}x^3\\
&+ \dfrac{4 K^2 U^6 - 2 V (1 + 3 K V) + 2 K U^4 (4 + 3 K V) + 
 U^2 (4 + 4 K V + 6 K^2 V^2)}{2U (1 + K U^2)^4 V^3}x^2y\\
&+\dfrac{(1 - 2 K (U^2 - 3 V) - 3 K^2 (U^4 + 2 U^2 V)) w}{2U (1 + K U^2)^4 V^2}xy^2\\
&+\dfrac{ K (-1 + K U^2) w^2}{U (1 + K U^2)^4 V}y^3+O\left(||(x,y)||^4\right),\\
\end{array}
\end{equation}
\end{subequations}
 in a Taylor expansion near $(x,y)=(0,0).$
 
 System (\ref{sisparen}), equations (\ref{valorPyQ}) and $w=\sqrt{D_H}$ allow us to use the derivation in \cite{guckenheimer} for the direct calculation of the first Lyapunov quantity $L_1$. In so doing, we obtain the following expression:
$$L_1=\dfrac{ 1}{64 w^2 (1 + K U^2)^6 U^4V^5 }\,l_1,$$
where
\begin{flalign}
\begin{array}{rl}
 l_{1}=&-32 K^3 U^{14} + 16 K^4 U^{13} V^2 w^2 + 2 U^2 V^3 \big(2 + 3 K V^2 (w-1) w\big) (1 + w^2) \\
&- 2 U V^4 (1 + w^2)^2 +
   V^5 w ( -1+w - w^2 + w^3) \\ 
& - 
  4 U^3 V^3 \big(-1 + (1 + 6 K V) w^2 + 6 K V w^4\big)+ 
  4 K^3 U^{11} V^2 \big(16 w^2 + K (V + 7 V w^2)\big) \\
&+ 
  U^4 V^2 \big(-24 K V (1 + w^2) - 8 (3 + w^2) + 
     15 K^2 V^3 w (-1 + w - w^2 + w^3)\big)  \\
&+ 
  4 U^6 V \big(12 + 9 K^2 V^2 (1 + w^2) + 4 K V (3 + w^2) + 
     5 K^3 V^4 w (-1 + w - w^2 + w^3)\big) \\
&+ 
  K^2 U^{12} \big(-96 - 48 K V + K^4 V^5 w (-1 + w - w^2 + w^3)\big)\\
  & + 
  6 K U^{10} \big(-16 - 8 K V + K^4 V^5 w (-1 + w - w^2 + w^3)\big)\\
  & + 
  U^8 \big(-32 + 48 K V + 24 K^2 V^2 (3 + w^2)+ 
     15 K^4 V^5 w (-1 + w - w^2 + w^3)\big) \\
    &  - 
  4 U^5 V^2 \big(-4 w^2 - 4 K V (1 + w^2) + 3 K^2 V^2 (-1 + w^4)\big) \\
  & + 
  2 K^2 U^9 V^2 \big(48 w^2 + 8 K (V + 5 V w^2) + 
     3 K^2 V^2 (1 + 6 w^2 + 5 w^4)\big)\\
     &  + 
  8 K U^7 V^2 \big(8 w^2 + 3 K (V + 3 V w^2) + 
     K^2 V^2 (2 + 7 w^2 + 5 w^4)\big).\\
\end{array}
\end{flalign}
Thus, we have obtained the following result.

\begin{theorem}\label{teobifhopf}
Let $(U,V,b,K)\in \Lambda$ be such that $T_H=0$, $D_H>0$ and $l_1\neq0$.
Then (\ref{hopf1}) undergoes a codimension-one Hopf bifurcation at the equilibrium $(U,V)$. In particular, if $l_1<0$ (resp. $l_1>0$), the Hopf bifurcation is supercritical (resp. subcritical), and a stable (resp. unstable) limit cycle bifurcates from $(U,V)$ under suitable parameter variation. 
\end{theorem}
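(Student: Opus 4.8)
The plan is to verify the three standard genericity conditions for a codimension-one Andronov--Hopf bifurcation---a non-hyperbolic linear part carrying a pair of purely imaginary eigenvalues, a transversality (eigenvalue-crossing) condition, and a non-vanishing first Lyapunov quantity---and then to invoke the Hopf bifurcation theorem as stated in \cite{guckenheimer,kuznetsov}. A structural simplification that I would record first is that the reparametrisation $\Psi$ in \eqref{eq:Psi} was engineered precisely so that $(U,V)$ is an equilibrium of \eqref{hopf1} for every admissible $(b,K)$: substituting $(x,y)=(U,V)$ into \eqref{hopf1} makes both right-hand sides vanish identically in $b$ and $K$. Consequently the whole analysis reduces to tracking the spectrum of the Jacobian $J(U,V)$ in \eqref{jacUV} as the single unfolding parameter $b$ varies, with $(U,V,K)$ held fixed.

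First I would establish the linear conditions. Under the hypotheses $T_H=0$ and $D_H>0$, the eigenvalues of $J_H(U,V)$ are $\pm i\,w$ with $w=\sqrt{D_H}>0$; these are purely imaginary and nonzero, so the linear part is of Hopf type. For transversality, the complex eigenvalues of $J(U,V)$ near the bifurcation have real part $T/2$ (since $T^2-4D<0$ there), whence $\left.\frac{\mathrm d}{\mathrm d b}\mathrm{Re}\,\lambda_\pm\right|_{T=0}=\tfrac12\,T_b=-\tfrac12\neq0$ by the already-computed partial derivative $T_b=-1$ from \eqref{trace}. Hence the eigenvalues cross the imaginary axis with nonzero speed as $b$ passes through $b(U,V,K)$, and $b$ unfolds the bifurcation generically.

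Next I would address the nonlinear nondegeneracy. Having translated the equilibrium to the origin in \eqref{hopf2} and diagonalised the purely-imaginary linear part through the eigenvector change of coordinates, the system is in the Poincar\'e normal form \eqref{sisparen}, $\dot x=-w\,y+P(x,y)$, $\dot y=w\,x+Q(x,y)$, with $P,Q$ given by \eqref{valorPyQ}. Applying the classical formula for the first Lyapunov quantity of \cite{guckenheimer} to the quadratic and cubic Taylor coefficients of $P$ and $Q$ reproduces the expression $L_1=\left[64 w^2 (1+KU^2)^6 U^4 V^5\right]^{-1} l_1$ recorded above. The decisive observation is that on $\Lambda\subset\mathbb{R}^4_+$ one has $U,V,K>0$ and $w>0$, so the prefactor multiplying $l_1$ is strictly positive; therefore $\mathrm{sign}(L_1)=\mathrm{sign}(l_1)$, and the hypothesis $l_1\neq0$ is equivalent to $L_1\neq0$.

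With all three conditions in hand, the Hopf bifurcation theorem yields a generic codimension-one bifurcation at $(U,V)$, with a unique small-amplitude limit cycle on one side of $b=b(U,V,K)$; the sign of $L_1$ fixes the criticality, $L_1<0$ giving a supercritical branch with a stable cycle and $L_1>0$ a subcritical branch with an unstable cycle, and the equivalence $\mathrm{sign}(L_1)=\mathrm{sign}(l_1)$ transfers the statement verbatim to $l_1$. I expect the main obstacle to be the bookkeeping in the Lyapunov-coefficient computation: correctly propagating the eigenvector change of coordinates through the quadratic and cubic jets and matching the orientation convention of \eqref{sisparen} to the sign convention of the Guckenheimer--Holmes formula, so that supercriticality indeed corresponds to $l_1<0$. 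I would cross-check that lengthy symbolic step against a computer-algebra calculation.
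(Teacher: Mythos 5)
Your proposal is correct and follows essentially the same route as the paper: exploiting the reparametrisation \eqref{eq:Psi} so that $(U,V)$ is an explicit equilibrium, verifying the purely imaginary spectrum from $T_H=0$, $D_H>0$, obtaining transversality from $T_b=-1$, and computing $L_1$ via the eigenvector change of coordinates \eqref{sisparen} and the Guckenheimer--Holmes formula, with the sign of $l_1$ determining criticality because the prefactor $\left[64\,w^2(1+KU^2)^6U^4V^5\right]^{-1}$ is strictly positive on $\Lambda$. The only additions are explicit statements of facts the paper leaves implicit (the $b$-independence of the equilibrium and the crossing speed $\tfrac12 T_b$), which do not change the argument.
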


Whenever condition $l_1\neq0$ in Theorem~\ref{teobifhopf} does not hold, the Hopf bifurcation of (\ref{hopf1}) at $(U,V)$ is degenerate. The actual codimension of this singularity ---and the stability of further limit cycles that bifurcate--- is determined by the sign of the so-called second Lyapunov quantity $L_2$; see, for instance,~\cite{kuznetsov}. However, since the transformation \eqref{eq:Psi} is invertible in the parameter set $\Lambda$, numerical evidence suggests that $l_1<0$ for representative parameter values in Table~\ref{table:Ap1} and, hence, the bifurcating limit cycle is stable.
 Furthermore, bifurcation theory ensures that the existence and stability of this stable periodic orbit persist for an open set of parameter values in the region $T(U,V,b,K)>0$, i.e., when the focus at $(U,V)$ is unstable~\cite{guckenheimer,kuznetsov}.

%%%%%%%%%%%%%%%%%%%%%%%%%%%%%%%
\subsection{Bogdanov--Takens bifurcation in the 2D system}\label{sec:BT}
%%%%%%%%%%%%%%%%%%%%%%%%%%%%%%%

We give now conditions such that our model undergoes a Bogdanov--Takens bifurcation under suitable parameter variation at a positive equilibrium. We prove the existence of a germ of a BT bifurcation and show that our system (under certain conditions) is locally topologically equi\-va\-lent to a normal form of the BT bifurcation. We refer to~\cite{kuznetsov} and the references therein for the derivation of the genericity and transversality conditions that need to be verified during this proof.

For the sake of clarity, it is convenient to state the dependence of the vector field \eqref{eq:4.10} on parameters $b$ and $K$ explicitly. Hence, throughout this section we denote $X: \mathbb{R}^4_+\longrightarrow\mathbb{R}^2_+$,
\begin{gather}
X(x,y;b,K) =  \bigg( \dfrac{x^2}{(1 + K x^2)y} + \alpha - b x, 
 x^2 + \alpha\epsilon - y  \bigg),
\end{gather}
where we use notation $(x,y)$ for the state variables.
Also, let us denote the Jacobian matrix of $X$ with respect to the variables $(x,y)$ as
$$\dfrac{\partial X}{\partial(x,y)}(x,y;b,K)=\left(%
\begin{array}{cc}
 -\dfrac{-2 x + b y + 2 b K x^2 y + b K^2 x^4 y}{(1 + K x^2)^2 y} & -\dfrac{x^2}{(1 + K x^2) y^2} \\
 2x & -1 \\
\end{array}%
\right)
.$$

{\em Step 1.} We verify that the system may exhibit a singularity with a double zero eigenvalue and geometric multiplicity one. 

\begin{lemma}\label{lem:BT1}
Consider the set
$$\begin{array}{rcl}
\Omega_{BT}&=&\big\{(x,y)\in\mathbb{R}^2_+:\, -2 x^5 + y^3>0, 2 x^3 - y^2>0, -2 x^5 + y^3- x^3 y >0, \\ 
 && (x^2 - y)(2 x^5 + x^3 y - y^3)>0, 4 x^5 - x^3 y - 6 x^2 y^2 + 4 y^3\neq0\big\}.
\end{array}$$
Then there is a positive equilibrium $(U,V)\in\Omega_{BT}$ of  \eqref{eq:4.10} and there are positive parameter values $(b^*,K^*)=\left(\dfrac{-2 U^5 + V^3}{2 U^5},\dfrac{2 U^3 - V^2}{U^2 V^2}\right)\in\mathbb{R}^2_+$ such that $\left.\dfrac{\partial X}{\partial(x,y)}\right|_{(U,V;b^*,K^*)}$ is nilpotent. 
\end{lemma}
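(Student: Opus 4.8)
The plan is to reduce the nilpotency requirement to two scalar equations and then solve them explicitly for $(b,K)$. Recall that a real $2\times2$ matrix is nilpotent with a single Jordan block (equivalently, a double zero eigenvalue of geometric multiplicity one) precisely when it is nonzero and has vanishing trace and determinant. For the Jacobian $\partial X/\partial(x,y)$ at a positive equilibrium the entry $J_{21}=2U$ is strictly positive, so the matrix can never be the zero matrix; hence the nilpotency claimed by the lemma is equivalent to the pair of conditions $T(U,V,b,K)=0$ and $D(U,V,b,K)=0$, with $T$ and $\hat{D}$ exactly as computed in \eqref{trace} and \eqref{det} (the vector field here being the same as in \sref{sec:hopf}).

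First I would impose $T=0$. From \eqref{trace} this gives $b+1=2U/[(1+KU^2)^2V]$. Substituting this relation into the determinant numerator $\hat{D}$ of \eqref{det}, the common factor $(1+KU^2)$ factors out and the condition $\hat{D}=0$ collapses to $1+KU^2=2U^3/V^2$. This is the crux of the computation: it decouples $K$ from $b$, yielding $K^\ast=(2U^3-V^2)/(U^2V^2)$, and back-substitution into the relation for $b$ then produces $b^\ast=(V^3-2U^5)/(2U^5)$, which are exactly the values in the statement and the unique solution of the system. Positivity is immediate from the defining inequalities of $\Omega_{BT}$: $K^\ast>0$ is equivalent to $2x^3-y^2>0$ and $b^\ast>0$ to $-2x^5+y^3>0$, both evaluated at $(U,V)$.

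It remains to guarantee that $(U,V)$ is a genuine positive equilibrium of \eqref{eq:4.10} for admissible (positive) production parameters. Here I would invoke the transformation $\Psi$ of \eqref{eq:Psi}: setting $(a,e)=\Psi(U,V,b^\ast,K^\ast)$ makes $(U,V)$ an equilibrium, and the positivity of $a$ and $e$ is controlled by the two domain conditions of $\Lambda$, namely $b^\ast(1+K^\ast U^2)V-U>0$ and $V-U^2>0$. Using $1+K^\ast U^2=2U^3/V^2$ the first reduces to $(-2U^5+V^3-U^3V)/(U^2V)>0$, i.e. the inequality $-2x^5+y^3-x^3y>0$ of $\Omega_{BT}$; this in turn forces the factor $2x^5+x^3y-y^3$ to be negative at $(U,V)$, so the product condition $(x^2-y)(2x^5+x^3y-y^3)>0$ in $\Omega_{BT}$ yields $U^2-V<0$, i.e. $V>U^2$, securing $e>0$. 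With all four positivity requirements met and $J_{21}=2U\neq0$, the Jacobian at $(U,V;b^\ast,K^\ast)$ is nonzero with zero trace and determinant, hence nilpotent, proving the lemma.

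I would close with two remarks. The remaining inequality $4x^5-x^3y-6x^2y^2+4y^3\neq0$ in $\Omega_{BT}$ plays no role in establishing nilpotency; it is a Bogdanov--Takens nondegeneracy condition reserved for the subsequent normal-form steps, and $\Omega_{BT}$ is a nonempty open subset of the first quadrant, so the whole construction is realizable for some $(U,V)$. The main obstacle is purely algebraic bookkeeping: spotting the single elimination $1+KU^2=2U^3/V^2$ that simultaneously solves $T=0$ and $D=0$, and then tracking how each defining inequality of $\Omega_{BT}$ matches one positivity requirement---the least transparent of these being the deduction of $V>U^2$ from the product inequality together with the sign of $-2x^5+y^3-x^3y$.
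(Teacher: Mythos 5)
Your computation is correct and follows essentially the same route as the paper: solve $T=0$ and $\hat D=0$ for $(b,K)$ via the factorization $\hat D=(1+KU^2)\bigl[2U^3-V^2(1+KU^2)\bigr]$, then read off the production parameters from the equilibrium equations and match each positivity requirement to a defining inequality of $\Omega_{BT}$; your formulas for $b^\ast$, $K^\ast$, $a$ and $e$ agree with the paper's $\alpha$ and $\varepsilon$ in \eqref{eq:systemeq-bt}, and your deduction of $V>U^2$ from the sign of $2U^5+U^3V-V^3$ is exactly the argument needed. The one substantive divergence is your closing remark that the condition $4x^5-x^3y-6x^2y^2+4y^3\neq0$ plays no role in this lemma: in the paper it is used precisely here, as the nonvanishing of $\det\bigl[\partial(\alpha,\varepsilon)/\partial(x,y)\bigr]$ in \eqref{eq:jacmap}, so that the Inverse Function Theorem lets one regard $(U,V)$ as locally determined by the \emph{given} model parameters $(\alpha,\varepsilon)$ rather than the other way around. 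Your forward construction (choose $(U,V)\in\Omega_{BT}$, then define the parameters) is logically sufficient for the existence statement as literally written, and is arguably cleaner for that purpose; what the paper's extra IFT step buys is that the BT point is realized for an open set of admissible $(\alpha,\varepsilon)$, which is what the unfolding in Lemmas~\ref{lem:BT2} and~\ref{lem:BT3} ultimately requires. So: not a gap in your proof, but a mischaracterization of where that fifth inequality earns its keep.
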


\begin{proof}

In order to prove this Lemma, we look look for solutions $(x,y,b,K)$ of the algebraic system
\begin{subequations}\label{eq:systemeq-b}
\begin{eqnarray}\label{eq:systemeq-bt1}
\dfrac{x^2}{(1 + K x^2)y} + \alpha - b x&=&0 ,\\\label{eq:systemeq-bt2}
x^2 + \alpha\varepsilon - y &=&0,\\\label{eq:systemeq-bt3}
\trace\left[\dfrac{\partial X}{\partial(x,y)}(x,y;b,K)\right]&=&0,\\ \label{eq:systemeq-bt4}
\det \left[\dfrac{\partial X}{\partial(x,y)}(x,y;b,K)\right]&=&0,
\end{eqnarray}
\end{subequations}\label{eq:systemeq}
with $(x,y)\in\Omega_{BT}$, $b>0$ and $K>0$.
Solving for $b$ and $K$ in \eqref{eq:systemeq-bt3}-\eqref{eq:systemeq-bt4} leads to 
 $b=\dfrac{-2 x^5 + y^3}{2 x^5}>0$ and $K=\dfrac{2 x^3 - y^2}{x^2 y^2}>0$. 
 Substitution of  $(b,K)$ in \eqref{eq:systemeq-bt1}-\eqref{eq:systemeq-bt2} and solving for $(\alpha,\varepsilon)$ leads to
\begin{gather}\label{eq:systemeq-bt}
\alpha=\dfrac{-2 x^5 + y^3- x^3 y }{2 x^4} \quad \textrm{and} \quad \varepsilon =\dfrac{2 x^4 (x^2 - y)}{2 x^5 + x^3 y - y^3}\,.
\end{gather}
In particular, notice that  $\alpha>0$ and $\varepsilon>0$ since $(x,y)\in\Omega_{BT}.$  The map $(x,y)\mapsto(\alpha,\varepsilon)$ defined by~\eqref{eq:systemeq-bt} has a Jacobian matrix given by 
\begin{equation}\label{eq:jacmap}
	\dfrac{\partial(\alpha,\varepsilon)}{\partial(x,y)}=\left(%
\begin{array}{cc}
 -1 + \dfrac{y}{2 x^2} - \dfrac{2 y^3}{x^5} & -\dfrac{x^3 - 3 y^2}{2 x^4} \\
 \dfrac{2 x^3 (2 x^7 + 5 x^5 y - x^3 y^2 - 6 x^2 y^3 + 4 y^4)}{(2 x^5 + 
  x^3 y - y^3)^2} & \dfrac{-6 x^9 + 6 x^6 y^2 - 4 x^4 y^3}{(2 x^5 + x^3 y - y^3)^2} \\
\end{array}%
\right)\,,
\end{equation}
which determinant is
$$\det \left[\dfrac{\partial(\alpha,\varepsilon)}{\partial(x,y)}\right]=\dfrac{4 x^5 - x^3 y - 6 x^2 y^2 + 4 y^3}{x(2 x^5 + x^3 y -  y^3)}.$$
Therefore, since $\det\left[ \dfrac{\partial(\alpha,\varepsilon)}{\partial(x,y)}\right]\neq0$, the Inverse Function theorem ensures that system~\eqref{eq:systemeq-bt} is locally invertible. Then, the solution of~\eqref{eq:systemeq-b} is given by $(x,y,b,K)=(U,V,b^*,K^*)\in\mathbb{R}^4_+$, where $(U,V)$ is locally defined by the inverse map of~\eqref{eq:systemeq-bt} and $b^*=\dfrac{-2 U^5 + V^3}{2 U^5}$ and $K^*=\dfrac{2 U^3 - V^2}{U^2 V^2}$.

Hence, at $(b,K)=(b^*,K^*)$ the equilibrium $(U,V)$ of the system \eqref{eq:4.10} has a Jacobian matrix given by
\begin{equation}\label{eq:jacBT}
	\dfrac{\partial {X}}{\partial(x,y)}(U,V;b^*,K^*)=\left(%
\begin{array}{cc}
 1 & -\dfrac{1}{2U} \\
 2U & -1 \\
\end{array}%
\right)
\end{equation}
with a double zero eigenvalue. In particular, note that \eqref{eq:jacBT} is not the null matrix. 
The corresponding generalized eigenvectors of \eqref{eq:jacBT} are given by 
\begin{equation} \label{eq:eigenvectors}
	\mathbf{v}_1=\left(\dfrac{1}{2U},1\right)^t, \hspace{5mm} \mathrm{and} \hspace{5mm}\mathbf{v}_2=\left(1,-1 + 2 U\right)^t\,.
\end{equation}
It follows that \eqref{eq:jacBT} is nilpotent and that the double zero eigenvalue has geometric multiplicity one.

\end{proof}

{\em Step 2.}
The next goal is to state the following transversality condition of a Bogdanov--Takens bifurcation.

\begin{lemma}\label{lem:BT2} Let $(U,V,b^*,K^*)$ be as in Lemma~\ref{lem:BT1} and 
consider the map $\displaystyle \Psi:\mathbb{R}^4\rightarrow\mathbb{R}^4$,
$$
	(x,y,b,K)\mapsto\left(\dfrac{x^2}{(1 + K x^2)y} + \alpha - b x, 
 x^2 + \alpha\epsilon - y  ,T,D\right),
$$
where $T$ and $D$ are the trace and determinant of the matrix 
$\dfrac{\partial X}{\partial(x,y)}(x,y;b,K),$
respectively. Then the map $\Psi$ is regular at $(x,y,b,K)=(U,V,b^{\ast},K^{\ast})$.
\end{lemma}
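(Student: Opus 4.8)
The objective is to prove that the $4\times4$ Jacobian $D\Psi$ is nonsingular at $(U,V,b^{*},K^{*})$, since regularity of $\Psi$ is exactly this statement. The plan is to exploit a natural $2\times2$ block structure that reduces the whole computation to the nondegeneracy condition already encoded in $\Omega_{BT}$. I would write $D\Psi$ in blocks, with top-left block $A=\frac{\partial X}{\partial(x,y)}$ equal to the nilpotent matrix \eqref{eq:jacBT}, top-right block $B$ holding the derivatives of $(X_1,X_2)$ with respect to $(b,K)$, bottom-left block $C$ the derivatives of $(T,D)$ with respect to $(x,y)$, and bottom-right block $E$ the derivatives of $(T,D)$ with respect to $(b,K)$. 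Two structural observations drive the argument: first, $X_2=x^2+\alpha\varepsilon-y$ is independent of $(b,K)$, so the second row of $B$ vanishes and $B=\mathbf{e}_1 w^{t}$ has rank one; second, since $T$ and $D$ depend on $b$ only through the affine terms $-b$ and $+b$, one has $\partial_b T=-1$ and $\partial_b D=1$.

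The next step is to show that $E$ is invertible. Using the identity $1+K^{*}U^2=2U^3/V^2$ (which follows from $K^{*}=(2U^3-V^2)/(U^2V^2)$), a direct computation of the four entries of $E$ gives $\det E=V^{2}/(2U)>0$. With $E$ invertible I would apply the Schur complement formula $\det D\Psi=\det E\cdot\det\!\big(A-BE^{-1}C\big)$. Because $B=\mathbf{e}_1 w^{t}$ is rank one, the correction term $BE^{-1}C=\mathbf{e}_1 r^{t}$ with $r^{t}=w^{t}E^{-1}C$ is a rank-one update of the \emph{singular} matrix $A$. Expanding the resulting $2\times2$ determinant and using $\det A=0$ collapses it to the single linear expression $r_1+2U\,r_2$, where $r_1,r_2$ are the entries of the row $w^{t}E^{-1}C$. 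This is the conceptual core: the nilpotency of $A$ is precisely what forces the whole determinant to depend on a single scalar combination.

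Finally, I would carry out the explicit evaluation of $C$, $E^{-1}$, and the product $w^{t}E^{-1}C$ at the BT point, substituting $b^{*}$, $K^{*}$ and the identity above throughout. The expected outcome is the clean factorisation $\det D\Psi=-\frac{V^{2}}{4U^{6}}\,(4U^{5}-U^{3}V-6U^{2}V^{2}+4V^{3})$. Since the hypothesis $(U,V)\in\Omega_{BT}$ guarantees $4U^{5}-U^{3}V-6U^{2}V^{2}+4V^{3}\neq0$, it follows that $\det D\Psi\neq0$ and hence $\Psi$ is regular at $(U,V,b^{*},K^{*})$.

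The main obstacle is the last evaluation: assembling $C$ requires the $x$- and $y$-partial derivatives of both the trace $T$ and the determinant $D$, and the product $w^{t}E^{-1}C$ produces a bulky rational expression whose simplification to the displayed factor relies essentially on the defining relations for $b^{*}$ and $K^{*}$. It is reassuring, and a good internal consistency check, that the factor obtained is precisely the nondegeneracy quantity $4U^{5}-U^{3}V-6U^{2}V^{2}+4V^{3}$ that already appears in $\Omega_{BT}$ and governs the invertibility of the map $(x,y)\mapsto(\alpha,\varepsilon)$ in Lemma~\ref{lem:BT1}; the transversality of $\Psi$ and the local invertibility established in Step~1 are thus controlled by the same condition.
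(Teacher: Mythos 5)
Your argument is correct and reaches exactly the paper's conclusion, but it gets there by a genuinely different route. The paper's proof simply writes out the full $4\times4$ Jacobian $D\Psi$, computes its determinant in one shot (``after some calculations''), and factors it as $\det[D\Psi]=\dfrac{2x^5}{(y+Kx^2y)^4}F(x,y,b,K)$ with $F(U,V,b^*,K^*)=-\dfrac{2U}{V^2}\bigl(4U^5-U^3V-6U^2V^2+4V^3\bigr)$; one checks easily that this agrees with your claimed value $\det D\Psi=-\dfrac{V^2}{4U^6}\bigl(4U^5-U^3V-6U^2V^2+4V^3\bigr)$ after substituting $1+K^*U^2=2U^3/V^2$. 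Your block/Schur-complement decomposition is sound: $E$ is indeed invertible with $\det E=\dfrac{2x^5}{(1+Kx^2)^2y^2}=\dfrac{V^2}{2U}$ at the BT point, $B$ is rank one because $X_2$ is independent of $(b,K)$, and the nilpotency of $A$ (i.e.\ $\det A=0$ together with $A_{11}=1$, $A_{22}=-1$, $A_{21}=2U$) does collapse $\det(A-BE^{-1}C)$ to the single scalar $r_1+2Ur_2$. What your approach buys is structural insight: it isolates \emph{why} the transversality of $\Psi$ reduces to one scalar condition, and it makes visible that this condition is the same quantity $4U^5-U^3V-6U^2V^2+4V^3$ governing the invertibility of $(x,y)\mapsto(\alpha,\varepsilon)$ in Lemma~\ref{lem:BT1}. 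What it does not buy is any real saving in the final computation: assembling $C$ (the $(x,y)$-partials of $T$ and $D$) and simplifying $w^{t}E^{-1}C$ is comparable in bulk to the paper's direct expansion, and you only state rather than carry out that step. As written, your proposal is a correct proof outline whose one unverified computation is confirmed by the paper's result.
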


\begin{proof}
The $4\times4$ Jacobian matrix $D\Psi=D\Psi(x,y;b,K)$ of the map $\Psi$ is
\begin{flalign*}
	&D\Psi=\\
	&=\left(%
\begin{array}{ccrc}
 -b - \dfrac{2 K x^3}{(1 + K x^2)^2 y} + \dfrac{2 x}{(1 + K x^2) y} & \dfrac{-x^2}{(1 + K x^2) y^2} & -x & \dfrac{-x^4}{(1 + K x^2)^2 y^2}  \\
 2x & -1 & 0& 0\\
 \dfrac{2 - 6 K x^2}{(1 + K x^2)^3 y} & \dfrac{-2 x}{(y + K x^2 y)^2} & -1 & \dfrac{-4 x^3}{(1 + K x^2)^3 y}  \\ 
 \dfrac{8 K x^4 + 2 K^2 x^6 - 2 y + 6 x^2 (1 + K y)}{(1 + K x^2)^3 y^2} & \dfrac{-4 x^3 - 4 K x^5 + 2 x y}{(1 + K x^2)^2 y^3} & 1 & \dfrac{-2 (x^5 + K x^7 - 2 x^3 y)}{(1 + K x^2)^3 y^2}\\
\end{array}%
\right).
\end{flalign*}

After some calculations, we have $\det \left[D\Psi(x,y,b,K)\right]=\dfrac{2x^5}{(y + K x^2 y)^4}F(x,y,b,K)$ with
\begin{equation} \label{eq:F}
	F(x,y,b,K)=-2 K x^5 - 9 x y + b y^2 + 2 b K x^2 y^2 + b K^2 x^4 y^2 + 
 x^3 (10 + K y).
\end{equation} 
In particular, straightforward substitution and algebraic simplification leads to 
\begin{gather*}
	F(U,V,b^{\ast},K^{\ast})=-\dfrac{2U}{V^2}(4 U^5 - U^3 V - 6 U^2 V^2 + 4 V^3)\neq0\,,
\end{gather*}
since $(U,V)\in\Omega_{BT}$. It follows that $
\det \left[D\Psi(U,V,b^{\ast},K^{\ast})\right]\neq0,
$
which ensures that the map $\Psi$ is regular at $(x,y,C,Q)=(U,V,b^{\ast},K^{\ast})$.

 \qed
\end{proof}

{\em Step 3.} 
We now construct a change of coordinates to transform $X(x,y;b,K)$ into a normal form of the Bogdanov--Takens bifurcation; we refer to~\cite{kuznetsov} once again.

\begin{lemma}\label{lem:BT3} Let $(U,V,b^*,K^*)$ be as in Lemma~\ref{lem:BT1} and consider the following auxiliary expressions:
\begin{gather*}
	G_1=8 U^{10} - 2 U^8 V - 4 U^5 V^3 - 3 U^3 V^4 + 2 V^6\,, \quad G_2=2 U^5 + 3 U^3 V - 2 V^3\,. 
 \end{gather*}
If $G_1\neq0$ and $G_2\neq0$, then there exists a smooth, invertible transformation of coordinates, an orien\-tation-preserving time rescaling, and a re\-pa\-ra\-me\-tri\-za\-tion such that, in a sufficiently small neighbourhood of $(x,y,b,K)=(U,V,b^{\ast},K^{\ast})$,  system \eqref{eq:4.10} is topologically equivalent to a normal form of the codimension-two Bogdanov--Takens bifurcation.
\end{lemma}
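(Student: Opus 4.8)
The plan is to verify the two standard non-degeneracy (genericity) conditions for the Bogdanov--Takens normal form, following the constructive procedure in Kuznetsov. By Lemma~\ref{lem:BT1} we already have a nilpotent Jacobian \eqref{eq:jacBT} with a double zero eigenvalue of geometric multiplicity one, together with the generalised eigenvectors $\mathbf{v}_1,\mathbf{v}_2$ in \eqref{eq:eigenvectors}; by Lemma~\ref{lem:BT2} the parameter-to-invariants map $\Psi$ is regular at $(U,V,b^\ast,K^\ast)$, which supplies the transversality of the two-parameter unfolding. What remains is purely the quadratic-coefficient computation on the centre manifold, and the claim is precisely that this computation produces nonzero coefficients exactly when $G_1\neq0$ and $G_2\neq0$.

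First I would set up the linear change of coordinates. Let $\mathbf{v}_0=\mathbf{v}_1$ and $\mathbf{v}_1^{\,\prime}=\mathbf{v}_2$ be the (generalised) right eigenvectors from \eqref{eq:eigenvectors}, satisfying $A\mathbf{v}_0=0$ and $A\mathbf{v}_1^{\,\prime}=\mathbf{v}_0$ with $A$ the matrix in \eqref{eq:jacBT}, and let $\mathbf{w}_0,\mathbf{w}_1$ be the corresponding left (adjoint) generalised eigenvectors normalised by $\langle \mathbf{w}_0,\mathbf{v}_0\rangle=\langle\mathbf{w}_1,\mathbf{v}_1^{\,\prime}\rangle=1$ and $\langle\mathbf{w}_0,\mathbf{v}_1^{\,\prime}\rangle=\langle\mathbf{w}_1,\mathbf{v}_0\rangle=0$. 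Translating the equilibrium $(U,V)$ to the origin (as was done for the Hopf case in \eqref{hopf2}) and writing $(x,y)^t = \xi_0\mathbf{v}_0+\xi_1\mathbf{v}_1^{\,\prime}$, I expand the vector field to second order. Denoting by $B(\cdot,\cdot)$ the symmetric bilinear form of second-order derivatives of $X$ at $(U,V;b^\ast,K^\ast)$, the two BT normal-form coefficients are the standard expressions
\begin{equation*}
a_{BT}=\tfrac{1}{2}\,\langle \mathbf{w}_1, B(\mathbf{v}_0,\mathbf{v}_0)\rangle,
\qquad
b_{BT}=\langle \mathbf{w}_0, B(\mathbf{v}_0,\mathbf{v}_0)\rangle+\langle \mathbf{w}_1, B(\mathbf{v}_0,\mathbf{v}_1^{\,\prime})\rangle .
\end{equation*}
The germ is a generic (codimension-two) BT point precisely when $a_{BT}\neq0$ and $b_{BT}\neq0$; my task is to show these two inequalities are equivalent to $G_2\neq0$ and $G_1\neq0$, respectively (or the appropriate pairing).

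Concretely, I would compute the Hessian data of \eqref{eq:4.10} at $(U,V;b^\ast,K^\ast)$, substituting $b^\ast=(-2U^5+V^3)/(2U^5)$ and $K^\ast=(2U^3-V^2)/(U^2V^2)$ from Lemma~\ref{lem:BT1} so that every coefficient becomes a rational function of $U,V$ only. Plugging these into the pairings above and clearing denominators, I expect $a_{BT}$ to reduce (up to a nonvanishing rational prefactor in $U,V$) to a constant multiple of $G_2=2U^5+3U^3V-2V^3$, and $b_{BT}$ to reduce to a constant multiple of $G_1=8U^{10}-2U^8V-4U^5V^3-3U^3V^4+2V^6$; the hypotheses $G_1\neq0$, $G_2\neq0$ then guarantee both coefficients are nonzero. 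Once the two genericity conditions hold, together with the nilpotency from Lemma~\ref{lem:BT1} and the regularity of $\Psi$ from Lemma~\ref{lem:BT2}, the Bogdanov--Takens theorem (see~\cite{kuznetsov}) yields a smooth invertible coordinate change, an orientation-preserving time rescaling, and a reparametrisation bringing \eqref{eq:4.10} to the topological normal form in a neighbourhood of $(U,V,b^\ast,K^\ast)$, as claimed.

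The main obstacle is bookkeeping rather than conceptual: the genuine difficulty is carrying the rational expressions for the second-order coefficients, the adjoint eigenvectors, and the normalisations through the substitution of $b^\ast,K^\ast$ without error, and then recognising that the resulting numerators factor through exactly $G_1$ and $G_2$. I would lean on a computer algebra system to perform the Taylor expansion and the simplification, and present only the final reduced forms of $a_{BT}$ and $b_{BT}$, verifying by inspection that their vanishing loci coincide with $G_2=0$ and $G_1=0$. A secondary point to check is that the prefactors multiplying $G_1$ and $G_2$ are themselves nonvanishing on $\Omega_{BT}$, so that the equivalence ``coefficient $\neq 0 \iff G_i\neq0$'' is genuine; this follows from the defining inequalities of $\Omega_{BT}$ (in particular $2U^3-V^2>0$ and $2U^5+U^3V-V^3\neq0$), which keep all denominators and spurious factors away from zero.
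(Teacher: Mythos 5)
Your proposal is correct and follows essentially the same route as the paper: nilpotency from Lemma~\ref{lem:BT1}, transversality of $\Psi$ from Lemma~\ref{lem:BT2}, and then verification of the two quadratic genericity conditions of the Bogdanov--Takens normal form, which the paper writes as $b_{20}\neq0$ (equivalent to $G_1\neq0$, with prefactor $1/(4U^{10}V)$) and $a_{20}+b_{11}=-V^2G_2/(4U^{10})\neq0$ (equivalent to $G_2\neq0$). Your computation of these coefficients via pairings of the bilinear form with left generalised eigenvectors is just the invariant reformulation of the paper's explicit change of basis $\mathbf{P}=[\mathbf{v}_1,\mathbf{v}_2]$ followed by a Taylor expansion, and your hedge about ``the appropriate pairing'' resolves correctly to $a_{BT}\propto G_1$ and $b_{BT}\propto G_2$.
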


\begin{proof}
Let us set up the equilibrium $(U,V)\in\Omega_{BT}$ of \eqref{eq:4.10} to the origin via the translation $x\mapsto x+U$, $y\mapsto y+V$ to obtain the equivalent system
\begin{equation}\label{eq:Y}
Y:\left\{
\begin{aligned}
x' &=	 \alpha  + (x + U)\left( -b+\dfrac{ (x + U)}{(1 + K (x + U)^2) (y + U)}\right),  \\
y' &=	 \alpha\varepsilon + (x + U)^2 - y - V\,.
\end{aligned}
\right.
\end{equation}
In particular, the Jacobian matrix of \eqref{eq:Y} at the equilibrium $(0,0)$ at the bifurcation point $(b^*,K^*)$ coincides with $\dfrac{\partial {X}}{\partial(x,y)}(U,V;b^*,K^*)$ in  \eqref{eq:jacBT}.

  Let $\mathbf{P}=[\mathbf{v}_1, \mathbf{v}_2]$ be the matrix whose columns are $\mathbf{v}_1$ and $\mathbf{v}_2$; see \eqref{eq:eigenvectors}. Next, consider the following change of coordinates:
\begin{equation}\label{eq:P}
 \left(%
\begin{array}{c}
  u\\
  v \\
\end{array}%
\right)
=
\mathbf{P}^{-1}\left(%
\begin{array}{c}
  x\\
  y \\
\end{array}%
\right).
\end{equation}
Then, the vector field given by
$$
	\mathbf{J}=\mathbf{P}^{-1}\circ Y\circ\mathbf{P},
$$
is $\mathcal{C}^{\infty}$-conjugated to $Y$ in \eqref{eq:Y}. 

Taking a Taylor expansion of $\mathbf{J}(u,v;b,K)$ with res\-pect to $(u,v)$ around $(u,v)=(0,0)$ and evaluating at $(b,K)=(b^{\ast},K^{\ast})$, one obtains
$$
\left(%
\begin{array}{c}
  \dot{u}\\
  \dot{v} \\
\end{array}%
\right)
=
\left(%
\begin{array}{cc}
  0 & 1 \\
  0 & 0 \\
\end{array}%
\right)
\left(%
\begin{array}{c}
  u\\
  v \\
\end{array}%
\right)+
\dfrac{1}{2}
\left(%
\begin{array}{c}
 a_{20}u^2+2\,a_{11}uv+a_{02}v^2+O(||(u,v)||^3)\\
   b_{20}u^2+2\,b_{11}uv+b_{02}v^2+O(||(u,v)||^3)\\
\end{array}%
\right),
$$
where we have that: $\displaystyle a_{20}=\dfrac{1}{4U^{10} V}\big( 8 U^{10} - 16 U^{11} + 4 U^9 V - 4 U^5 V^3 + 8 U^6 V^3- 3 U^3 V^4 + 6 U^4 V^4 + 2 V^6 - 4 U V^6\big)$, $\displaystyle b_{20}=\dfrac{1}{4 U^{10} V}\left(8 U^{10} - 2 U^8 V - 4 U^5 V^3 - 3 U^3 V^4 + 2 V^6\right)$ and $\displaystyle b_{11}= \dfrac{1}{2 U^9 V}\left(-4 U^9 + 8 U^{10} - 2 U^8 V + U^4 V^3\right) - \dfrac{1}{2 U^9 V}\left(4 U^5 V^3 + 3 U^3 V^4 - 2 V^6\right)$.

If $b_{20}\neq0$ and $a_{20}+b_{11}\neq0$, then the theory of normal forms for bifurcations~\cite{kuznetsov} ensures that our system fulfills the necessary genericity conditions to undergo a codimension two Bogdanov--Takens bifurcation. 
In particular, condition $G_1\neq0$ ensures that $b_{20}\neq0$. Furthermore, after some algebraic manipulation one obtains $\displaystyle a_{20}+b_{11}= -V^2G_2/4U^{10}$.

In summary, Lemmas~\ref{lem:BT1} and \ref{lem:BT2}, and inequality $G_1G_2\neq0$  ensure that the genericity and transversality conditions of a codimension two Bogdanov--Takens normal form are satisfied. Hence there exists a smooth, invertible transformation of coordinates, an orientation-preserving time rescaling, and a reparametrization such that, in a sufficiently small neighbourhood of $(x,y,\alpha,k)=(U,V,b^{\ast},K^{\ast})$, the system \eqref{eq:4.10} is topologically equivalent to one of the following normal forms of a Bogdanov--Takens bifurcation:
\begin{equation}
   \left\{%
\begin{array}{rcl} \label{eq:nf-bt}
    \dot{\xi}_1 & = &\xi_2, \\
    \dot{\xi}_2 & = & \beta_1+\beta_2 \xi_2 +\xi_2^2\pm \xi_1\xi_2,\\
\end{array}%
\right.
\end{equation}
where the sign of the term $\xi_1\xi_2$ in (\ref{eq:nf-bt}) is determined by the sign of $(a_{20}+b_{11})b_{20}$.
\end{proof}

The next theorem is a straightforward consequence from the findings in Lemmas~\ref{lem:BT1}, \ref{lem:BT2} and \ref{lem:BT3}, as can be seen in~\cite{guckenheimer,kuznetsov}, and it summarises the main result in this section.

%%%%%%%%%%%%%%%%%%%%%%%%%%%%%%%%%%%%%%%
\begin{theorem}\label{teo:bt}Let $(U,V,b^*,K^*)$ be as in Lemma~\ref{lem:BT1} and consider the quantities $G_1$ and $G_2$ defined in Lemma~\ref{lem:BT3}.
	Then if $(b,K)=(b^*,K^*)$, system \eqref{eq:4.10} undergoes a codimension-two Bogdanov--Takens bifurcation at $(x,y)=(U,V).$
\end{theorem}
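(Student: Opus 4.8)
The plan is to verify that Theorem~\ref{teo:bt} follows directly by assembling the three preceding lemmas into the standard checklist for a generic codimension-two Bogdanov--Takens bifurcation, as laid out in~\cite{kuznetsov}. A BT bifurcation at an equilibrium requires three ingredients: (i) the linearization at the bifurcation point has a double-zero eigenvalue with a single Jordan block (geometric multiplicity one); (ii) two nondegeneracy (genericity) conditions on the quadratic part of the normal form hold; and (iii) a transversality condition guaranteeing that the two-parameter family $(b,K)$ unfolds the singularity regularly. The entire argument consists of checking that Lemmas~\ref{lem:BT1}, \ref{lem:BT2}, and \ref{lem:BT3} establish exactly these three items, so that the abstract theorem applies verbatim.

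First I would invoke Lemma~\ref{lem:BT1} to fix the candidate equilibrium $(U,V)\in\Omega_{BT}$ and parameter values $(b^*,K^*)$, and to record that the Jacobian~\eqref{eq:jacBT} is nilpotent with the double-zero eigenvalue of geometric multiplicity one; this settles ingredient~(i) and also exhibits the generalized eigenvectors~\eqref{eq:eigenvectors} needed later. Next I would cite Lemma~\ref{lem:BT3}, whose hypotheses $G_1\neq0$ and $G_2\neq0$ translate (as shown there) into $b_{20}\neq0$ and $a_{20}+b_{11}\neq0$, which are precisely the two genericity conditions on the quadratic coefficients of the transformed system, settling ingredient~(ii); Lemma~\ref{lem:BT3} moreover produces the smooth coordinate change, time rescaling, and reparametrization bringing the system to the normal form~\eqref{eq:nf-bt}. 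Finally I would appeal to Lemma~\ref{lem:BT2}, whose regularity of the map $\Psi$ at $(U,V,b^*,K^*)$ is exactly the transversality condition~(iii): the nonvanishing of $\det[D\Psi]$ guarantees that the parameters $(b,K)$ map submersively onto the unfolding parameters $(\beta_1,\beta_2)$, so the family is a versal deformation. Collecting these three verified conditions, the equivalence theorem of~\cite{kuznetsov} yields topological equivalence to~\eqref{eq:nf-bt} and hence the asserted codimension-two BT bifurcation.

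Since all the analytic content has already been discharged in the lemmas, the only remaining obstacle is a careful bookkeeping point: I must confirm that the transversality formulation used in Lemma~\ref{lem:BT2} (regularity of the augmented map sending $(x,y,b,K)$ to the two equilibrium conditions together with $T$ and $D$) is genuinely equivalent to the standard BT transversality statement in~\cite{kuznetsov}, which is usually phrased as nondegeneracy of the map from parameters to the normal-form unfolding coefficients. The two are equivalent because the trace and determinant serve as local defining functions for the BT point in parameter space, and their regular dependence on $(b,K)$ (with the equilibrium conditions solved by the implicit function theorem) is exactly what makes the induced parameter map a local diffeomorphism onto $(\beta_1,\beta_2)$; I would state this linkage explicitly rather than leave it implicit. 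Beyond that reconciliation, the proof is purely a matter of quoting the three lemmas in order and observing that their combined hypotheses $(b,K)=(b^*,K^*)$ and $G_1 G_2\neq0$ are exactly those appearing in the theorem statement, so no further computation is required.
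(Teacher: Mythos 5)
Your proposal is correct and follows essentially the same route as the paper, which likewise presents Theorem~\ref{teo:bt} as a direct assembly of Lemmas~\ref{lem:BT1}, \ref{lem:BT2} and \ref{lem:BT3} into the standard Bogdanov--Takens checklist of~\cite{kuznetsov} (double-zero eigenvalue with geometric multiplicity one, genericity of the quadratic coefficients via $G_1G_2\neq0$, and transversality via the regularity of $\Psi$). Your explicit reconciliation of the transversality formulation with the normal-form unfolding is a useful clarification the paper leaves implicit, but it does not change the argument.
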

%%%%%%%%%%%%%%%%%%%%%%%%%%%%%%%%%%%%%%%

\begin{table}[t]
\centering
  \begin{tabular}{ |c|c|c|c|c|c|c|c|c|}
    \hline
    $\gamma$ & $k_1$ & $k_2$ &  $r$ & $\alpha$ & $\varepsilon$ & $\mu_1$ & $\mu_2$ & $\mu_3$ \\ \hline
    $[0,3.5]$ & $0.07$ & $[0,0.25]$ & $0.4$ & $0.01$ & $0.1$ & $[0,2.5]$ & $[0,2.5]$ & $0.5$ \\ \hline
  \end{tabular}
  \caption{Parameter values used in section~\ref{sec:bifanal3D}.}
  \label{table:Ap2}
\end{table}

%%%%%%%%%%%%%%%%%%%%%%%%%%%%%%%
\section{Bifurcation and chaos in the 3D system}
\label{sec:bifanal3D}
%%%%%%%%%%%%%%%%%%%%%%%%%%%%%%% 

 %%%%%%%%%%%%%%%%%%%%%%%%%%%%%%%%%%%%%%%%%%%%%%%%%%%%%%%%
\begin{figure}[t!]
\begin{center}
\includegraphics[scale=1]{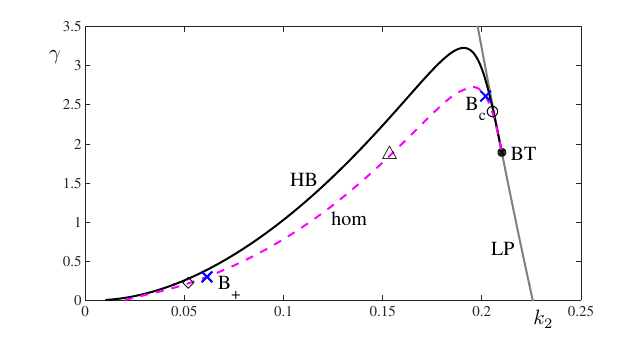}%{bif_k2_gamma1.eps}
  \caption{Bifurcation diagram of  \eqref{eq:s0} in the $(k_2,\gamma)$-plane. Shilnikov homoclinic chaos can be found near the homoclinic bifurcation (dashed) curve hom along the lefthand segment from the Belyakov point $B_c$; curves in grey and black correspond to locus of LP and HB points, respectively.}  
\label{fig:k2g}
\end{center}
\end{figure}
%%%%%%%%%%%%%%%%%%%%%%%%%%%%%%%%%%%%%%%%%%%%%%%%%%%%%%%%

{As seen from the re-scaled parameters in~\eqref{eq:4.10c}, parameters $e$, $b$, and $a$ capture the bacterial population's response to autoinducers at constant concentration, as well as the decay rates of both bacterial subpopulations. Based on this, we analyse the effects of slowly varying the static bacteria’s response to autoinducer presence and motile bacteria's saturation parameter, while the autoinducer concentration changes according to its dynamic production. Thus, to understand the emerging global behaviour, we perform a bifurcation analysis of system~\eqref{eq:s0} using {\sc Auto}~\cite{Auto}, allowing both parameters $k_2$ and $\gamma$ to vary slowly. The other parameters remain fixed in their typical values as in Table~\ref{table:Ap2}, except for $\mu_1=0.2,$ and~$\mu_2=0.7$.} Fig.~\ref{fig:k2g} shows the bifurcation diagram in the $(k_2,\gamma)$-plane. There curves of saddle-node (LP) and supercritical Hopf (labelled in this section as HB) bifurcation meet at a BT point. Also, a curve of Shilnikov homoclinic bifurcation (hom) emerges from the point BT. A codimension-two Belyakov point ($B_c$) on the curve hom marks the onset of chaotic dynamics. To the right of~$B_c$, the homoclinic bifurcation is simple; and to the left of $B_c$, the homoclinic bifurcation is chaotic. More concretely, one can find horseshoe dynamics in return maps defined in a neighbourhood of the homoclinic orbit. The suspension of the Smale horsehoes form a hyperbolic invariant chaotic set which contains countably many periodic orbits of saddle-type.
The horseshoe dynamics is robust under small parameter perturbations; hence, the chaotic dynamics persist if the homoclinic connection is broken; see~\cite{guckenheimer, kuznetsov}. A second codimension-two point $B_+$ and also called a Belyakov point, lies on hom for smaller values of both $k_2$ and $\gamma$. At the point $B_+$, the steady-state associated with the homoclinic orbit has repeated stable eigenvalues.
On the  segment of hom to the right of  $B_+$, the homoclinic orbit converges to a saddle-focus (i.e., it has a complex pair of stable eigenvalues); and on the segment to the left of $B_+$, the same equilibrium is a real saddle (i.e., the stable eigenvalues are real). 

 %%%%%%%%%%%%%%%%%%%%%%%%%%%%%%%%%%%%%%%%%%%%%%%%%%%%%%%%
\begin{figure}[t!]
\begin{center}
\includegraphics[scale=0.85]{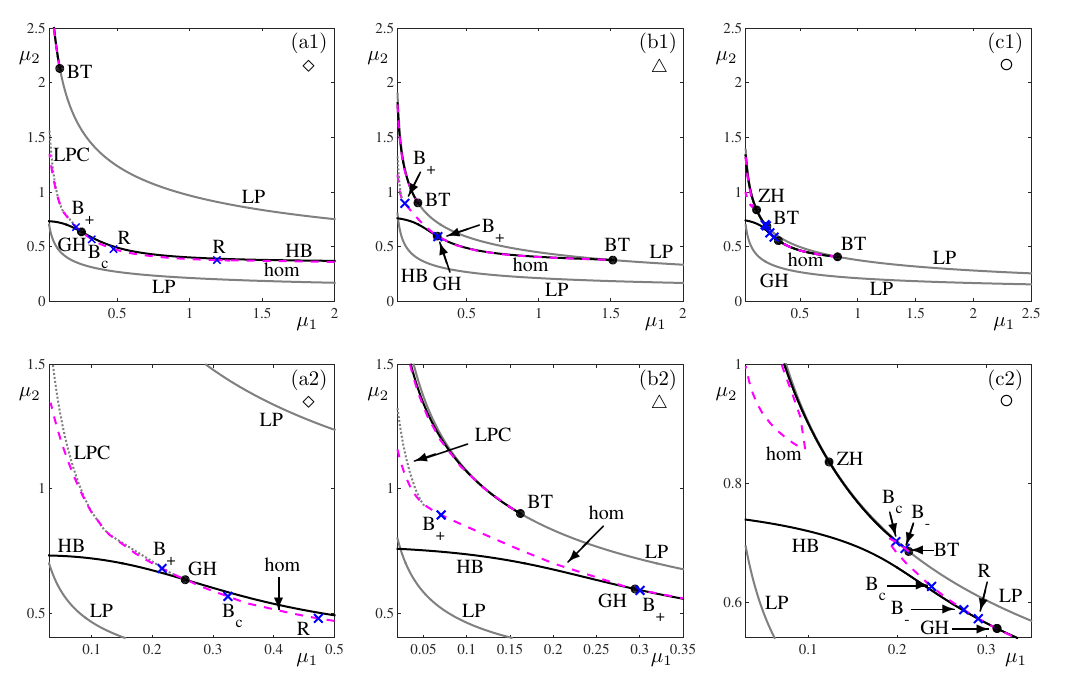}%{bif_mu1_mu2.eps}
  \caption{Bifurcation diagram of  \eqref{eq:s0} in the $(\mu_1,\mu_2)$-plane. Bottom row images are enlargements of selected regions in the top figures. Parameter values  $k_2$ and $\gamma$ correspond to those at the points {\large$\diamond$}, $\bigtriangleup$, and {\Large$\circ$} along the homoclinic bifurcation curve {\bf hom} in Fig.~\ref{fig:k2g}. The other parameter values remain fixed as in Fig.~\ref{fig:k2g}.}  
\label{fig:mu1mu2}
\end{center}
\end{figure}
%%%%%%%%%%%%%%%%%%%%%%%%%%%%%%%%%%%%%%%%%%%%%%%%%%%%%%%%

The bifurcation picture in Fig.~\ref{fig:k2g} is just a partial representation of the full complexity one may encounter in this region of parameter space. Indeed, the saddle periodic orbits may also undergo further bifurcations such as period-doubling and torus bifurcations~\cite{guckenheimer, kuznetsov}. Moreover, the presence of the chaotic Shilnikov homoclinic bifurcation and that of the Belyakov points $B_c$ and $B_+$ imply a very complicated structure (not shown) of infinitely many saddle-node and period-doubling bifurcations of periodic orbits as well as of subsidiary $n$-homoclinic orbits~\cite{paguirre,bel80,bel84,gonchenko97,seb05}. Moreover, for each of these subsidiary $n$-homoclinic orbits, the system exhibits countably many horseshoes as in the original homoclinic scenario.

 %%%%%%%%%%%%%%%%%%%%%%%%%%%%%%%%%%%%%%%%%%%%%%%%%%%%%%%%
\begin{figure}[t!]
\begin{center}
\includegraphics[scale=1]{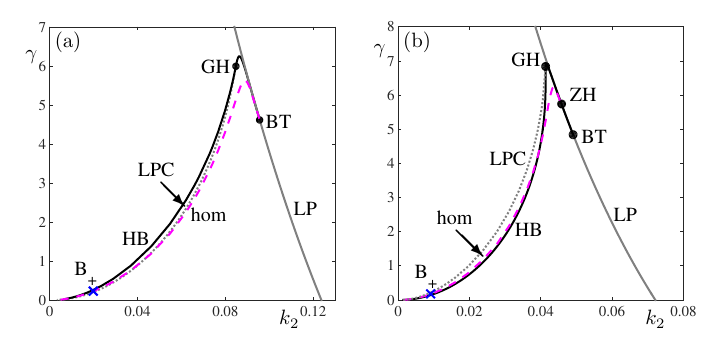}%{bif_k2_gamma2.eps}
  \caption{Bifurcation diagrams of  \eqref{eq:s0} in the $(k_2,\gamma)$-plane. Parameter values are  as in Fig.~\ref{fig:k2g}, except $\mu_2=1$ in panel (a), and $\mu_2=1.4$ in panel (b).}  
\label{fig:k2gb}
\end{center}
\end{figure}
%%%%%%%%%%%%%%%%%%%%%%%%%%%%%%%%%%%%%%%%%%%%%%%%%%%%%%%%

The Belyakov points $B_c$ and $B_+$ in Fig.~\ref{fig:k2g} divide the curve hom in three segments. We now fix parameters $(k_2,\gamma)$ at selected points on each of these segments ---labelled as {\large$\diamond$}, $\bigtriangleup$, and {\Large$\circ$}, respectively--- and allow parameters $\mu_1$ and $\mu_2$ to vary. The resulting picture is shown in Fig.~\ref{fig:mu1mu2} in the top row, while suitable enlargements are presented in the bottom row.  While  the resulting bifurcation scenarios shown in Fig.~\ref{fig:mu1mu2} are slightly different to one another, the main ingredients organizing complicated dynamics (such as Belyakov points and homoclinic chaos) remain a common feature in each case. In particular, in the three cases, the bifurcation curves corresponding to the scenario of Fig.~\ref{fig:k2g} are those occurring for lower values of $\mu_2$ in Fig.~\ref{fig:mu1mu2}. The second curve LP  (that which is present for larger values of $\mu_2$) and associated bifurcation phenomena is not present in Fig.~\ref{fig:k2g}.
The Hopf bifurcation curve hom in Fig.~\ref{fig:mu1mu2} is now separated into two segments by a degenerate Hopf point GH from which a curve of saddle-node of periodic orbits (LPC) emerges.
The curve of homoclinic bifurcation hom now contains further codimension two resonant ($R$) and Belyakov ($B_-$) points. While a single stable cycle bifurcates from $B_-$ (and no extra bifurcations occur),  the full picture near the points $R$ include period-doubling bifurcations and a curve of 2-homoclinic bifurcation (not shown) emanating from the codimension-two points. Of particular interest is the case in panels (c) where the curve hom terminates at a BT point located on the second curve LP.  Also, from this BT point a  subcritical Hopf bifurcation emerges. In particular, this HB curve is very close to the saddle-node curve; these two curves meet at a Zero-Hopf bifurcation point (ZH). 
The exact dynamical features which appear in phase space for parameter values near a ZH point depend on higher order terms in a normal form approach, which is beyond the scope of this work. It suffices to say that this codimension-two point is often related to the emergence of invariant tori and chaotic invariant sets~\cite{guckenheimer,kuznetsov}. 
For larger values of $\mu_2$ and lower values of $\mu_1$, another (non chaotic) homoclinic bifurcation curve makes a sharp turn near the point ZH.

Finally, Fig.~\ref{fig:k2gb} shows bifurcation diagrams in the $(k_2,\gamma)$-plane with values of $\mu_2=1$ (in panel~(a)), and $\mu_2=1.4$ (in panel~(b)). The resulting bifurcation diagrams are similar to that in Fig.~\ref{fig:k2g}, but with higher values of $\mu_2$. In  Fig.~\ref{fig:k2gb} the HB curve has a degenerate Hopf point GH, from which a curve of saddle-node of cycles LPC emerges. All in all, upon comparing figures~\ref{fig:k2g} and~\ref{fig:k2gb}, an increase of parameter $\mu_2$ produces extra bifurcations that favours chaotic behaviour. Indeed, the hom curve in  Fig.~\ref{fig:k2g} is now chaotic along its entire length. Also, in Fig.~\ref{fig:k2gb}(b), an increase of $\mu_2$ favours the emergence of a ZH point. However, the bifurcation sets are ``pushed'' towards lower values of $k_2$ and larger values of $\gamma$ as $\mu_2$ is increased; compare the scale of the variables in figures~\ref{fig:k2g} and~\ref{fig:k2gb}. 

%%%%%%%%%%%%%%%%%%%%%%%%%%%%%%%%%%%%%%%%%%%%%%%%%%%%%%%%

\begin{figure}[t]
  \centering
   \includegraphics[scale=.33]{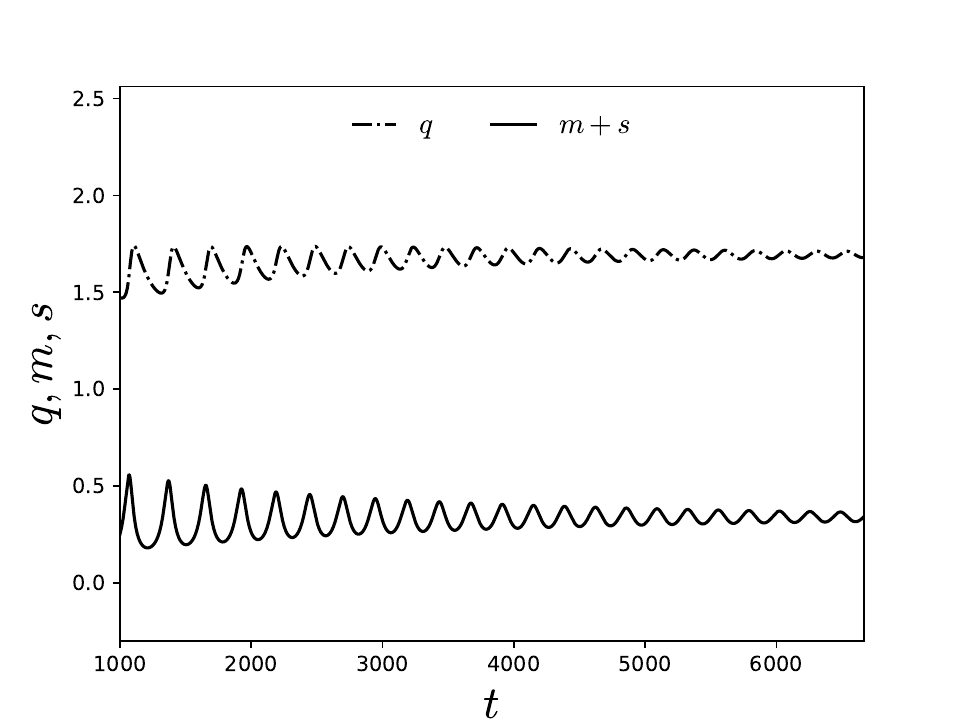}
  \hspace{-0.5cm}
  \includegraphics[scale=.33]{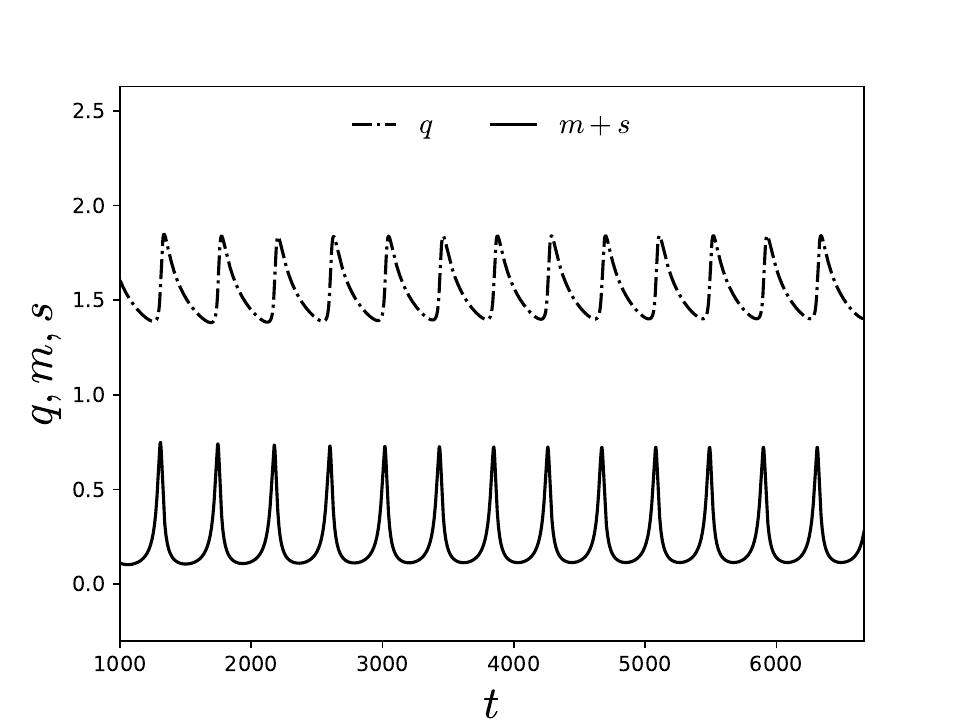} 
  \hspace{-0.5cm}
  \includegraphics[scale=.33]{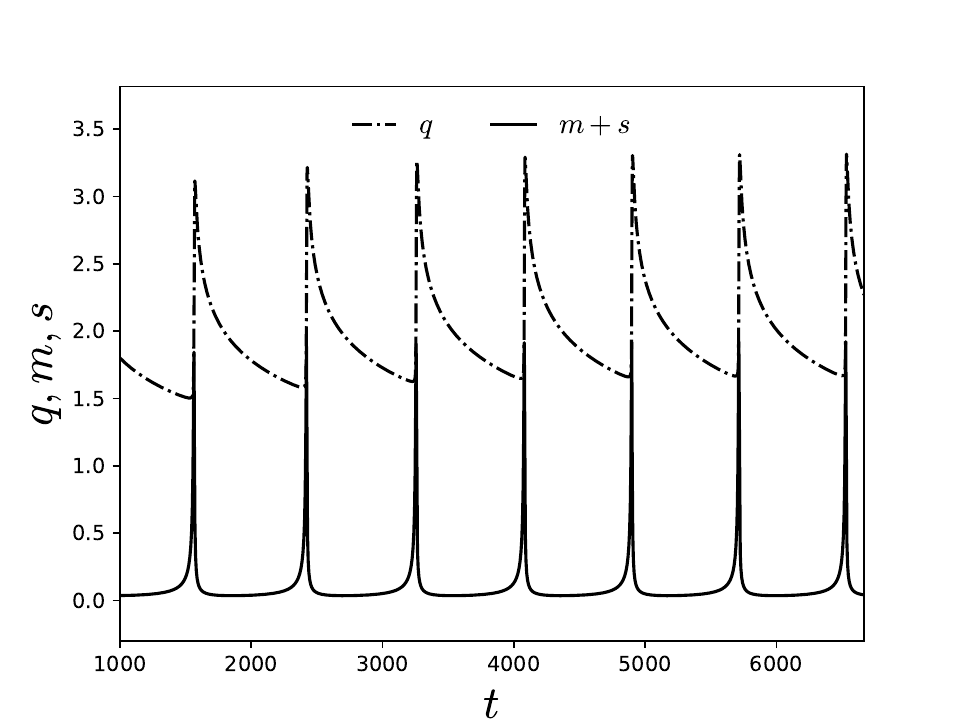}\\
  \includegraphics[scale=.375]{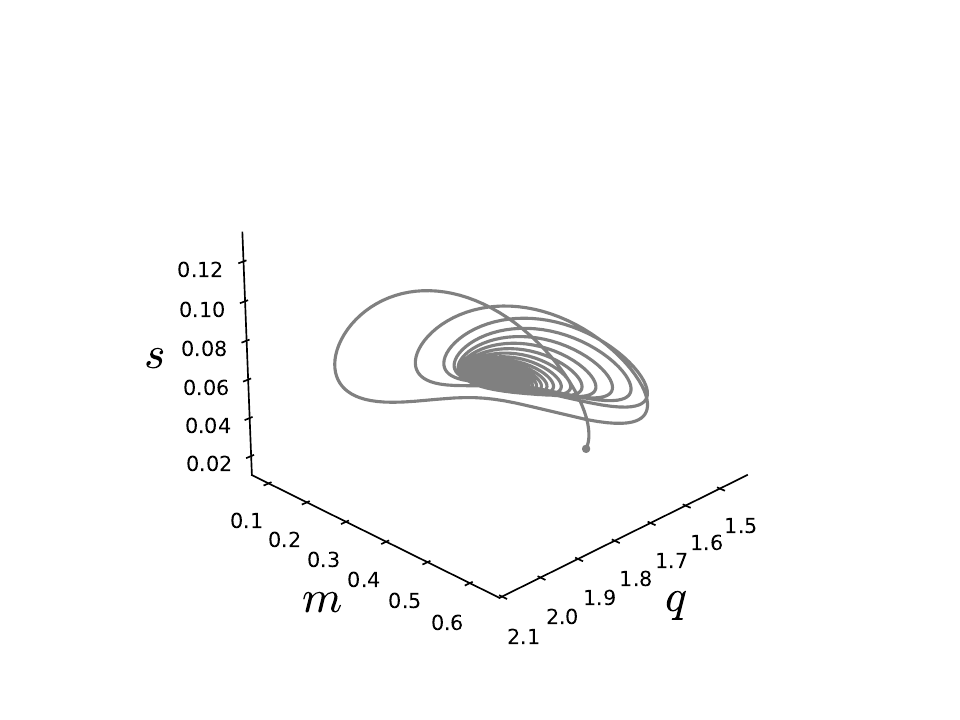}
  \hspace{-1.5cm}
  \includegraphics[scale=.375]{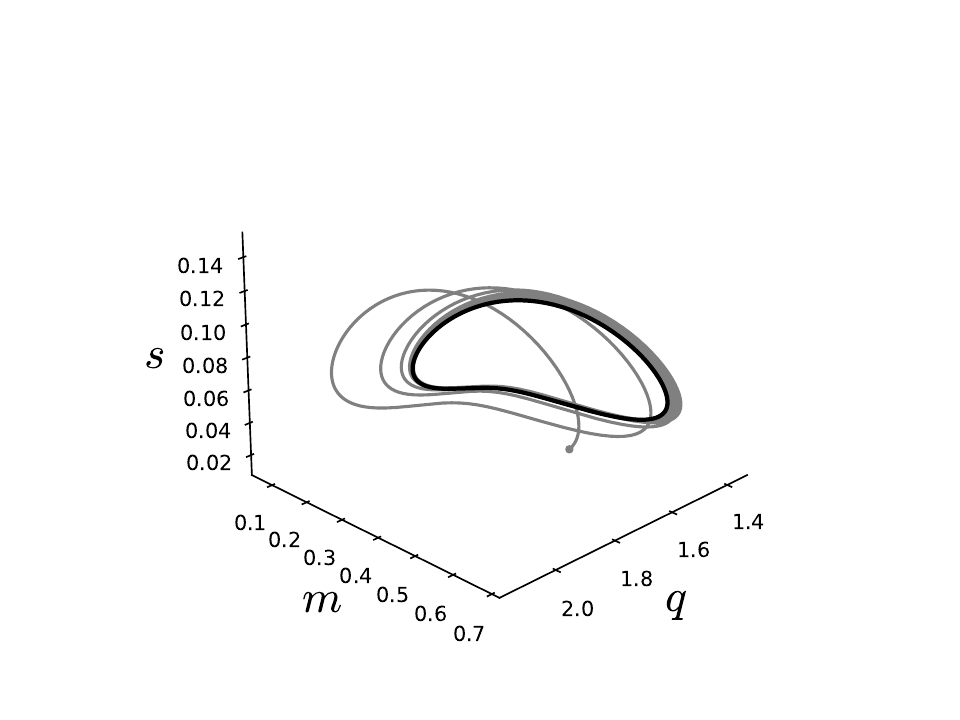} 
  \hspace{-1.5cm}
  \includegraphics[scale=.375]{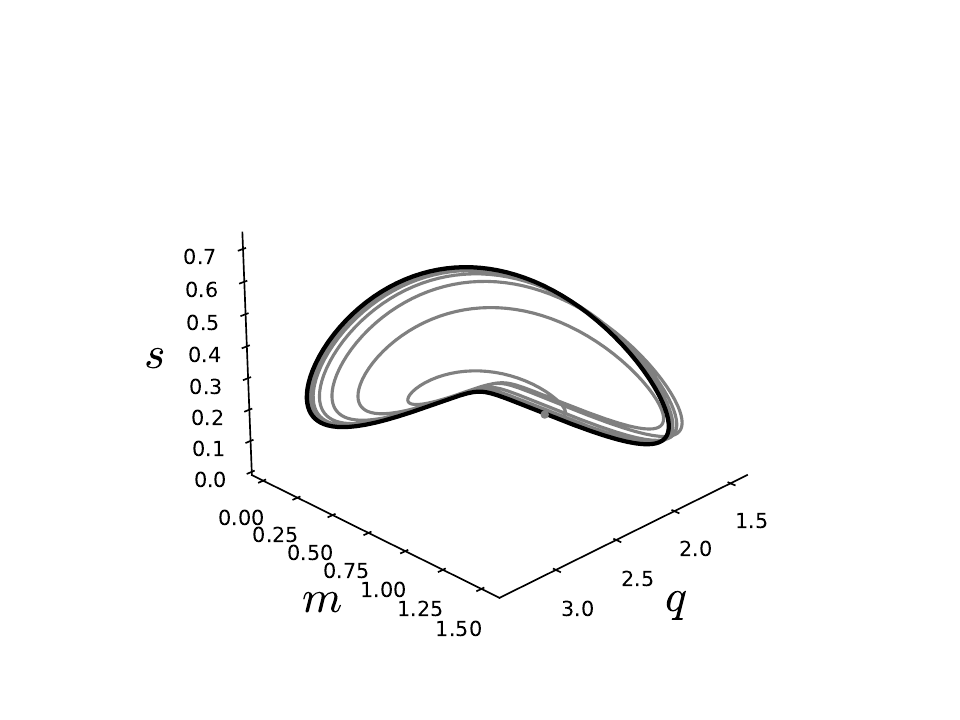}\\
   $(a)$ \hspace{4.5cm} $(b)$ \hspace{4.5cm} $(c)$
\caption{ Samples of time solutions (upper panels) and corresponding phase orbits (lower panels) of system~\eqref{eq:s0}. (a)~Damped oscillatory solution with a stable focus for $\gamma=2.5$; (b)~oscillatory periodic solution with a stable limit-cycle for $\gamma=2$; (c)~spike-like periodic solution near a homoclinic orbit for $\gamma = 1.8$. Initial conditions are chosen close to the saddle point. Parameter values: $k_2 = 0.1535$, $\mu_1 = 0.2$, $\mu_2 = 0.7$, with remaining parameters as listed in Table~\ref{table:Ap2}.}
\label{fig:evol3D}
\end{figure}
%%%%%%%%%%%%%%%%%%%%%%%%%%%%%%%%%%%%%%%%%%%%%%%%%%%%%%%%

{ Sample solutions for autoinducer concentration and bacterial population, along with their corresponding phase orbits, are presented in Fig.~\ref{fig:evol3D} for representative $k_2$ and $\gamma$ parameter values from Fig.~\ref{fig:k2g}. The stable focus shown in column~(a) undergoes a bifurcation into a limit cycle in column~(b) as the parameter $\gamma$ crosses the HB curve. The oscillation period further increases as $\gamma$ approaches the hom curve, as illustrated in column~(c).

This transition is evident in the time solutions displayed in the upper panels, where the crests and valleys of the autoinducer concentration correspond to the peaks and troughs of the bacterial population over time. In column~(a), the amplitudes of both $q$ and $m+s$ decay over time, eventually stabilising at a steady-state. In contrast, columns~(b) and (c) demonstrate a progression in oscillatory behaviour, with the period lengthening and culminating in a spike-like periodic pattern right until the formation of the homoclinic orbit.

The bifurcation diagrams in Fig.~\ref{fig:mu1mu2} and Fig.~\ref{fig:k2gb} reveal that the dynamical richness and complexity of the system are not exclusively determined by the parameters~$k_2$ and~$\gamma$. Decay rate parameters also play a pivotal role in shaping the system’s behaviour. Specifically, chaotic regions and their intricate features are not restricted to the $(k_2,\gamma)$-plane but can also emerge through the gradual variation of other system parameters, highlighting the multidimensional nature of the dynamics. In particular, whenever parameters approach sufficiently close to the hom curve in its chaotic regime, one might expect the stable periodic motion to undergo multiple saddle-node and period doubling phenomena (not shown in the bifurcation diagrams) and, ultimately, encounter regions in phase space where chaos exists.
However, the chaotic solutions cannot be directly accessed through standard time integration from initial conditions due to the saddle-like nature of the hyperbolic set, which prevents it from acting as an attractor. A chaotic trajectory may be reached from the stable manifold of the saddle invariant set. A nearby orbit spends a long transient ``visiting" the chaotic invariant set until it manages to escape and converge to an attracting object.
Furthermore, these chaotic regions in phase space are typically confined to a small neighbourhood surrounding the homoclinic curve, making the identification and characterisation of transient chaos particularly challenging. The detailed exploration of these chaotic sets lies beyond the scope of the present work. For an in-depth theoretical and numerical analysis and further insights into the dynamics of these regions, the reader is referred to~\cite{paguirre}. }

%%%%%%%%%%%%%%%%%%%%%%%%%%%%%%%
\section{Discussion}
\label{sec:disc}

Analysing the effect of autoinducers on bacterial populations is crucial due to their key role in the QS mechanism. We presented a model that demonstrates the interaction between autoinducers and two subtypes of bacteria. A thorough analysis of this model revealed parameter sets that lead to oscillation dynamics in both the constant autoinducer sub-model and the full three-component model.
The study was performed with a combination of both rigorous normal form analysis and numerical continuation methods.
 Upon analysing the full system, we were able to understand how autoinducer concentrations interact with bacterial populations and influence bacterial communication triggered by these interactions.

The constant autoinducer system \eqref{eq:4.11} shows that parameter $e$ is inversely proportional to the autoinducer concentration $q_0$, indicating that changes in $e$ can impact response rates and production rates on the population. Exploring additional parameters revealed that parameter $b$, related to bacteria decaying rates, leads to a family of limit cycles via Hopf bifurcation. By conducting two-parameter numerical continuation on parameters $b$ and~$e$, the $(b,e)$-plane was divided into four regions based on their dynamical events. It was observed that no synchronising behaviour occurs for high values of $e$, suggesting that oscillatory behaviour may not occur below certain critical shallow threshold autoinducer concentration; this discovery agrees with QS features already recognised in e.~g.~\cite{kaur,Li,taylor}.

Furthermore, as can be seen from Proposition~\ref{teolocalstability}, when $\mathcal{J}=b\left/\left(1-\left. u_*^2\right/\left[v_*\left(1+Ku_*^2\right)\right]\right)\right.$, notice that $\text{tra}(\mathbf{J})=\left[(b+1)u_*^2 - v_*\left(1+Ku_*^2\right)\right]\left/\left[v_*\left(1+Ku_*^2\right)-u_*^2\right]\right.$ and $\text{det}\left(\vectorr{J}\right)=0$, which suggest that other nonlinear events may trigger synchronising behaviour  on the bacterial population not only by means of a Hopf bifurcation. To fully understand such an implication and, in consequence, the impact of autoinducer dynamics on the population, we analysed the 3D system and identified $k_2$ and $\gamma$ as key parameters for synchronising dynamics. Through Hopf bifurcations, self-sustained oscillations were observed to emerge. A two-parameter continuation analysis revealed that these parameters play a role in the emergence of additional oscillatory behaviour and robust synchronisation properties in the population. From section \ref{sec:bifanal3D}, { the identification and mapping of homoclinic bifurcations in the model are crucial for understanding the dynamic behaviour of autoinducer concentration and bacterial population interactions. The sample solutions presented in Fig.~\ref{fig:evol3D} illustrate how the system transitions from a stable focus to oscillatory behaviour through a Hopf bifurcation as the parameter $\gamma$ crosses the HB curve. As $\gamma$ approaches the homoclinic bifurcation, the increasing oscillation period signifies a critical shift in system dynamics, leading to complex behaviours such as spike-like periodic patterns. This progression highlights the importance of recognising homoclinic bifurcations, as they serve as pivotal points that can influence different types of oscillatory behaviour. } 

Moreover, it was observed that a Shilnikov homoclinic bifurcation branch originates from a Bogdanov--Takens point, leading to a Belyakov point $B_c$ that separates simple and chaotic behaviour along the homoclinic curve. Another Belyakov point $B_+$ marks the boundary between real saddle and saddle-focus steady-states. By conducting a two-parameter continuation along three pivotal points on the homoclinic curve, varying parameters $\mu_1$ and $\mu_2$ (equivalent to parameter $b$ in the constant autoinducer system), intricate dynamics were revealed resulting in resonant and additional Belyakov points. Additionally, a zero-Hopf point appears in an Hopf curve intersecting a nearby saddle-node curve. These findings support chaotic dynamics as infinitely many saddle-node, period-doubling orbits and $n$-homoclinic orbits ---accompanied with countably many horseshoes for all $n\in\mathbb{N}$--- come to birth as well as invariant tori and invariant chaotic sets. {In spite of all this rich dynamics, accessing chaotic solutions is hindered by the saddle-like nature of the hyperbolic set. Moreover, these chaotic regions are confined to a small neighbourhood around the homoclinic curve. All this prevents direct access to chaotic orbits through standard simulations.}

The QS model examined in this study depicts synchronised oscillatory dynamics in bacterial populations, potentially exhibiting synchronising characteristics. {This behaviour is not only influenced by the presence of an autoinducer concentration threshold, but also underscores how elevated autoinducer levels drive pronounced peaks in bacterial population dynamics. Understanding these transitions not only enhances our comprehension of the underlying biological processes but also aids in predicting system behaviour under varying conditions, ultimately contributing to more effective modelling and control strategies in microbial systems.} However, to fully understand the global picture of the QS mechanism, migration dynamics and time delays should also be considered. The inclusion of a time delay is crucial as it influences the dynamics of autoinducer reception and emission, indicating that bacteria do not react instantaneously to autoinducers (e.g.~\cite{Mukherjee}). These aspects will be addressed in future research.

%%%%%%%%%%%%%%%%%%%%%%%%%%%%%%%
\section*{Acknowledgements}  Viviana R-E would like to thank Agencia Nacional de Investigaci\'on y Desarrollo de Chile (Grant: Beca Doctorado Nacional N$^\circ$ 21211263). PA thanks Proyecto Interno UTFSM PI-LIR-24-04 and Proyecto Basal CMM-Universidad de Chile. VFBM thanks the financial support by Asociación Mexicana de Cultura A.C.
%%%%%%%%%%%%%%%%%%%%%%%%%%%%%%%
\vspace*{-0.23cm}
\bibliographystyle{siamplain}
\bibliography{../QSbiblio_feb2020,QSbiblio24feb}

\end{document}